 \newcommand{\hm}[1]{\leavevmode{\marginpar{\tiny%
 $ \hbox to 0mm{\hspace*{-0.5mm} $ \leftarrow $ \hss}%
 \vcenter{\vrule depth 0.1mm height 0.1mm width \the\marginparwidth}%
 \hbox to
 0mm{\hss $ \rightarrow $ \hspace*{-0.5mm}} $ \\\relax\raggedright #1}}}
\newcommand{\euler}{\mathrm{e}}
\newcommand{\drm}{\mathrm{d}}
\newcommand{\dvol}{\mathrm{dvol}}
\newcommand{\RR}{\mathbb{R}}
\newcommand{\CC}{\mathbb{C}}
\newcommand{\NN}{\mathbb{N}}
\renewcommand{\epsilon}{\varepsilon}
\DeclareMathOperator{\diam}{\mathop{diam}}
\DeclareMathOperator{\Vol}{\mathop{Vol}}
\newtheorem{theorem}{Theorem}[section]
\newtheorem{lemma}[theorem]{Lemma}
\newtheorem{proposition}[theorem]{Proposition}
\newtheorem{corollary}[theorem]{Corollary}
\theoremstyle{definition}
\theoremstyle{remark}
\newtheorem{remark}[theorem]{Remark}
	\definecolor{darkred}{rgb}{0.5,0,0}
	\definecolor{darkgreen}{rgb}{0,0.5,0}
	\definecolor{darkblue}{rgb}{0,0,0.5}
\begin{document}
\author{Christian Rose}
\affil{Technische Universit\"at Chemnitz, Fakult\"at f\"ur Mathematik, Germany}
\title{Heat kernel upper bound on Riemannian manifolds with locally uniform Ricci curvature integral bounds}
\maketitle
\begin{abstract}
This article shows that under locally uniformly integral bounds of the negative part of Ricci curvature the heat kernel admits a Gaussian upper bound for small times. This provides general assumptions on the geometry of a manifold such that certain function spaces are in the Kato class. Additionally, the results imply bounds on the first Betti number.
\end{abstract}
\section{Introduction}
One of the most important invariants of a Riemannian manifold is given by its heat kernel. Because there is no explicit representation of it in general, an important and interesting topic in geometric analysis is the small-time behavior of this function in terms of the underlying geometry. Especially Gaussian upper bounds are of particular interest. The article \cite{Grigoryan-99} provides plenty examples for Riemannian manifolds possessing a heat kernel with Gaussian behavior. A very prominent one is given by the following.
\begin{theorem}[\cite{Grigoryan-99}]
Let $M$ be a Riemannian manifold with bounded geometry. Then the heat kernel $p_t(\cdot,\cdot)$ can be bounded by
\begin{align}\label{upperbound}
\sup_{x\in M} p_t(x,x)\leq Ct^{-n/2}, \quad 0<t\leq 1\text,
\end{align}
where $C>0$ depends on the geometry of $M$.
\end{theorem}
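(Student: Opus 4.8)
The plan is to reduce \eqref{upperbound} to a \emph{local Faber--Krahn inequality} of Euclidean type, uniform in the base point, and then to invoke the standard equivalence between such inequalities and on-diagonal heat kernel bounds. Write $n=\dim M$ and recall that bounded geometry provides constants $K\ge 0$ and $i_0>0$ with $\Ric\ge-(n-1)K$ and injectivity radius at least $i_0$ at every point (in particular $M$ is complete). By a theorem of Grigor'yan (see also Carron and Coulhon--Grigor'yan), \eqref{upperbound} for $0<t\le 1$ is equivalent to the existence of $r_0,a>0$ such that
\[
  \lambda_1(\Omega)\ \ge\ a\,\Vol(\Omega)^{-2/n}
\]
for every $x\in M$ and every nonempty precompact open $\Omega\subseteq B(x,r_0)$, where $\lambda_1(\Omega)$ denotes the bottom of the Dirichlet spectrum of the Laplacian on $\Omega$. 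It therefore suffices to produce such $r_0$ and $a$ depending only on $n$, $K$ and $i_0$.

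To do so I would combine four classical ingredients, each with location-independent constants. First, Bishop--Gromov volume comparison under $\Ric\ge-(n-1)K$ yields a uniform volume doubling property on balls of radius $\le r_0\le 1$. Second, Buser's inequality, under the same curvature bound, yields a uniform $L^2$ Neumann--Poincaré inequality on such balls, $\int_{B(x,r)}|f-\bar f|^2\,\dvol\le Cr^2\int_{B(x,2r)}|\nabla f|^2\,\dvol$, with $\bar f$ the mean of $f$ over $B(x,r)$. Third, by the Saloff-Coste--Grigor'yan theorem---volume doubling together with a scale-invariant Poincaré inequality implies a scale-invariant Sobolev inequality---one obtains a local Sobolev, hence a local Faber--Krahn, inequality at scales $\le r_0$, but with a constant still carrying a factor $\Vol(B(x,r_0))^{2/n}$. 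Fourth, Croke's isoperimetric inequality shows that the injectivity radius bound alone forces $\Vol(B(x,r))\ge c(n)\,r^n$ for all $x$ and all $r\le i_0/2$; inserting this lower volume bound converts the Sobolev inequality into $\lambda_1(\Omega)\ge a\,\Vol(\Omega)^{-2/n}$ on $B(x,r_0)$ with $r_0=\min(1,i_0/2)$ and $a=a(n,K,i_0)$.

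A more direct route, bypassing the Faber--Krahn characterization, is also available. The Li--Yau heat kernel estimate under $\Ric\ge-(n-1)K$ gives $p_t(x,x)\le C(n)\,\Vol(B(x,\sqrt t))^{-1}\euler^{C(n,K)t}$ for all $t>0$; combined with Croke's bound $\Vol(B(x,\sqrt t))\ge c(n)\,t^{n/2}$, valid for $\sqrt t\le i_0/2$, this yields \eqref{upperbound} for $t\le\min(1,i_0^2/4)$. The residual range $t\in(i_0^2/4,1]$, if nonempty, is covered by enlarging the constant, since $t\mapsto p_t(x,x)$ is non-increasing and on that interval is bounded by its value at $t=i_0^2/4$, a quantity depending only on $n$, $K$ and $i_0$.

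I expect the main obstacle to be the \emph{uniformity} of all constants over $x\in M$, together with the fact that the injectivity radius hypothesis is indispensable: a lower Ricci bound by itself is compatible with collapsing, where $\Vol(B(x,r))\,r^{-n}\to 0$ and \eqref{upperbound} fails. Croke's inequality is precisely the tool that turns the absence of short geodesic loops into a Euclidean-type volume lower bound at small scales, and it is the step requiring the most care. By contrast, the comparison-geometry inputs (Bishop--Gromov, Buser) have constants manifestly depending only on $n$, $K$ and the scale, so their uniformity over $M$ is automatic.
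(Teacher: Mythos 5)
The paper does not prove this result---it is quoted from Grigor'yan's survey, and the paper itself remarks that the method there is ``the volume doubling property and the so-called relative Faber--Krahn inequalities.'' Your first route (Bishop--Gromov doubling, Buser's Poincar\'e inequality, the Saloff-Coste--Grigor'yan implication to a local Sobolev/Faber--Krahn inequality, then Croke's injectivity-radius-to-volume lower bound to make the Faber--Krahn constant uniform in $x$) is precisely that approach, and it is sound. Your second route is a genuinely different and arguably cleaner shortcut: Li--Yau gives $p_t(x,x)\le C(n)\,V(x,\sqrt t)^{-1}\euler^{C(n)Kt}$ under $\Ric\ge-(n-1)K$, Croke gives $V(x,\sqrt t)\ge c(n)\,t^{n/2}$ for $\sqrt t\le i_0/2$, and monotonicity of $t\mapsto p_t(x,x)$ absorbs the residual interval $(i_0^2/4,1]$; this bypasses the Faber--Krahn machinery entirely at the cost of invoking the parabolic Harnack inequality as a black box. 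You have also correctly identified the crux: a lower Ricci bound alone allows collapse, so the injectivity radius hypothesis is exactly what supplies the Euclidean-type lower bound $V(x,r)\gtrsim r^n$ at small scales, and Croke's inequality is the mechanism that converts absence of short geodesic loops into that volume bound.
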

Another interesting example would be the case of complete, connected and non-compact Riemannian manifolds with non-negative Ricci curvature. In this case the heat kernel satisfies a bound like \eqref{upperbound} for all $t>0$. The techniques used in \cite{Grigoryan-99} for the proof of such bounds are the volume doubling property and the so-called relative Faber-Krahn inequalities. For another approach see \cite{SaloffCoste-02}.\\
Considering the above theorem leads to the question under which weaker geometric conditions a bound like \eqref{upperbound} is satisfied.  
Of course, the assumption that the Ricci curvature is bounded from below is a quite strong condition. One could think of a manifold with Ricci curvature staying around zero almost everywhere but providing a set where the Ricci curvature has a deep well. In this case, the bound \eqref{upperbound} gets worse as it should be, because it grows exponentially as the Ricci curvature bound decreases. From an analytic point of view it therefore seems natural to replace the boundedness by an integral condition. A first step was done by Gallot. For a map $f\colon M \to \RR$ let $v_-:= \sup(0,-v)$ and $v_+:=\sup(0,v)$. Denote by $\rho\colon M\to\RR$ the function whose values are the smallest eigenvalues of the Ricci tensor.
\begin{theorem}[\cite{Gallot-88}]\label{Gallot}
Let $D>0$, $\lambda>0$ and $p>n\geq 3$. There are explicitly computable constants $K_G, C_G>0$ such that for any compact Riemannian manifold $M$ with $\dim M=n$, $\diam(M)\leq D$ and Ricci curvature satisfying
\begin{align}\label{curvaturecondition}
\frac 1{\Vol(M)}\int \left(\frac{\rho_-}{n-1}-\lambda^2\right)_+^{p/2}\dvol\leq K_G,
\end{align}
the heat kernel $p_t(x,y)$ can be bounded from above by
\[
\sup_{x,y\in M}p_t(x,y)\leq C_Gt^{-p/2},\quad 0<t\leq 1\text.
\]
\end{theorem}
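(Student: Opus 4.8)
The plan is to deduce the heat kernel estimate from a Sobolev inequality of ``dimension'' $p$: namely, to prove that once the constant $K_G$ is fixed small enough in terms of $n$ and $p$, every compact $M$ satisfying the hypotheses admits the inequality
\[
\left(\int_M|f|^{\frac{2p}{p-2}}\dvol\right)^{\frac{p-2}{p}}\le A\int_M|\nabla f|^2\dvol+B\int_M|f|^2\dvol,\qquad f\in C^\infty(M),
\]
with $A,B>0$ explicit and depending on $n$, $p$, $D$, $\lambda$ (and on $\Vol(M)$, which we normalize to $1$; this is harmless, the diameter bound keeping the geometry non-degenerate). Producing this inequality from the curvature condition \eqref{curvaturecondition} is the substance of the theorem; the rest is soft.

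\medskip
\noindent\textbf{From the Sobolev inequality to the heat kernel bound.} Given the displayed inequality, the on-diagonal estimate $\sup_{x\in M}p_t(x,x)\le Ct^{-p/2}$ for $0<t\le 1$ follows by standard machinery. One may run the Nash--Moser iteration directly on the heat semigroup, or use the Grigor'yan/Carlen--Kusuoka--Stroock equivalence: the Sobolev inequality implies the Faber--Krahn inequality $\lambda_1(\Omega)\ge c\,\Vol(\Omega)^{-2/p}$ for every open $\Omega$ of small enough volume (test against functions supported in $\Omega$ and absorb the zeroth-order term), which yields $p_t(x,x)\le Ct^{-p/2}$ for $t$ below a threshold determined by the data; since on a compact manifold $t\mapsto p_t(x,x)$ is nonincreasing, the bound persists for all $0<t\le 1$ at the cost of the constant. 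Finally Cauchy--Schwarz and the semigroup property give $p_{2t}(x,y)=\int_M p_t(x,z)p_t(z,y)\,\dvol(z)\le\sqrt{p_{2t}(x,x)}\sqrt{p_{2t}(y,y)}\le\sup_{z}p_{2t}(z,z)$, hence $\sup_{x,y}p_t(x,y)\le\sup_{z}p_t(z,z)$, and rescaling $t$ produces the claimed form with $C_G=C_G(n,p,D,\lambda)$.

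\medskip
\noindent\textbf{Proving the Sobolev inequality.} I would obtain it from an isoperimetric inequality via the Federer--Fleming coarea method: it suffices to establish a Cheeger-type bound $\Vol(\partial\Omega)\ge c\min(\Vol(\Omega),\Vol(M\setminus\Omega))^{(p-1)/p}$ for all smooth domains $\Omega\subset M$, since this gives the $L^1$-Sobolev inequality $\|f-\bar f\|_{p/(p-1)}\le c^{-1}\|\nabla f\|_1$, which upgrades to the $L^2$-Sobolev inequality above by applying it to $|f|^{\gamma}$ with $\gamma=\frac{2(p-1)}{p-2}$, using Hölder, and absorbing the resulting lower-order terms via the spectral gap (itself a consequence of the isoperimetric bound). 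For the isoperimetric estimate, fix $\Omega$ with $\Vol(\Omega)\le\Vol(M\setminus\Omega)$, let $r=\dist(\cdot,\partial\Omega)$, and work in Fermi coordinates along $\partial\Omega$. Applying Bochner's formula to $r$ and using $|\nabla r|=1$ yields the Riccati inequality $\phi'+\phi^2/(n-1)\le\rho_-$ for the mean curvature $\phi=\Delta r$ of the level sets along a unit-speed normal geodesic; writing $\rho_-\le(n-1)\lambda^2+(n-1)\bigl(\tfrac{\rho_-}{n-1}-\lambda^2\bigr)_+$ and comparing with the solution of the model equation $\bar\phi'+\bar\phi^2/(n-1)=(n-1)\lambda^2$, a Gronwall argument bounds the defect $(\phi-\bar\phi)_+$---and hence the Jacobian of the normal exponential map---in terms of one-dimensional integrals of $\bigl(\tfrac{\rho_-}{n-1}-\lambda^2\bigr)_+$ along geodesics. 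Integrating these Jacobian bounds over $\partial\Omega$ and over $r\in[0,D]$ expresses $\Vol(\Omega)$ (equivalently $\Vol(M)$) through $\Vol(\partial\Omega)$ and a bulk integral of $\bigl(\tfrac{\rho_-}{n-1}-\lambda^2\bigr)_+$ over $M$; Hölder's inequality with exponent $p/2$ converts the latter into $\int_M\bigl(\tfrac{\rho_-}{n-1}-\lambda^2\bigr)_+^{p/2}\dvol\le K_G\Vol(M)$, and choosing $K_G$ small forces the comparison to stay close to the constant-curvature model, yielding the isoperimetric inequality with a uniform constant. The hypothesis $p>n$ enters exactly here, so that the Hölder exponents and the resulting estimate close up.

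\medskip
\noindent\textbf{Main obstacle.} The delicate point is the isoperimetric step. Integral curvature bounds provide no pointwise Laplacian or Jacobian comparison, so one must integrate the Riccati inequality with a genuinely variable, merely $L^{p/2}$-controlled forcing term over a whole family of normal geodesics, all the while keeping every constant dependent only on $n$, $p$, $D$, $\lambda$. Controlling the ``bad'' set where $\phi$ exceeds the model mean curvature, handling the cut-locus contributions, and tracking how the smallness of $K_G$ propagates through the coarea/Hölder bootstrap down to the final Sobolev constant is where the real work lies.
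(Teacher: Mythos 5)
The paper does not reprove this result; it cites Gallot and summarizes his strategy in one sentence: a global isoperimetric inequality of ``dimension'' $p$ derived from the integral Ricci bound, followed by a symmetrization procedure in the spirit of B\'erard--Gallot--Meyer that directly compares the heat kernel to that of a model space. Your core geometric step agrees with Gallot's: the Heintze--Karcher style control of the normal exponential Jacobian via the Riccati inequality $\phi'+\phi^2/(n-1)\le\rho_-$, splitting $\rho_-\le(n-1)\lambda^2+(n-1)(\rho_-/(n-1)-\lambda^2)_+$, a Gr\"onwall-type estimate for the defect against the constant-curvature model, and an $L^{p/2}$ H\"older bound on the forcing term integrated over all normal geodesics --- this is exactly how the smallness of $K_G$ yields the isoperimetric inequality with a uniform constant, and $p>n$ is indeed where the exponents close up. Where you diverge is downstream: you pass from the isoperimetric inequality to an $L^2$-Sobolev inequality of dimension $p$ via Federer--Fleming, then to a Faber--Krahn inequality, and finally to the on-diagonal bound by Nash--Moser iteration or the Grigor'yan/Carlen--Kusuoka--Stroock equivalence, plus the standard Chapman--Kolmogorov/Cauchy--Schwarz argument for the off-diagonal supremum. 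Gallot instead uses the isoperimetric inequality to run a Schwarz rearrangement comparison that bounds the heat kernel (and the spectrum) directly against the model manifold, without interposing a functional inequality. Both routes are correct; the symmetrization route is shorter and gives sharp comparison constants tied to the model, whereas your Faber--Krahn/Nash route is more robust and localizes better --- which is in fact precisely why the present paper adopts the Faber--Krahn approach (Theorem \ref{theorem:upperbound}, via Theorem 15.14 of \cite{Grigoryan-09}) when it replaces Gallot's global hypothesis by the locally uniform integral bounds. One small caveat: your monotonicity-in-$t$ argument to extend the on-diagonal bound from a small threshold $t_*$ to all $0<t\le 1$ gives $p_t(x,x)\le p_{t_*}(x,x)\le C t_*^{-p/2}$, i.e.\ a constant bound on $[t_*,1]$; this is weaker than $Ct^{-p/2}$ but of course dominated by it on $[t_*,1]$ after enlarging $C$, so the conclusion holds --- worth stating precisely, since the inequality direction you want is $t^{-p/2}\ge t_*^{-p/2}$ for $t\le t_*$, not the reverse.
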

There are several things which should be noted here. On one hand, the heat kernel does not provide the right local dimension $n$, which depends on the technique. Gallot obtains a global isoperimetric inequality of dimension $p$ controlling the negative part of the Ricci curvature. This in turn yields the above estimate for the heat kernel by a symmetrization procedure. On the other hand, he assumes that $M$ is compact and the Ricci curvature is globally $L^p$-bounded. Yang noticed in \cite{Yang-92} that it is possible to get local isoperimetric inequalities assuming a local $L^p$-bound on the negative part of the Ricci curvature. 
\begin{theorem}[\cite{Yang-92}]\label{Yang}
Let $M$ be a complete connected Riemannian manifold of dimension $n$, $p>n/2$, $r>0$ and $v>0$. Assume that for all $x\in M$, $\Vol(B(x,r))\geq v$. There are explicit constants $K_Y, C_Y>0$ such that if 
$$\int_{B(x,3r)}\rho_-^p<K_Y\text,$$
then we have for all $\Omega\subset B(x,r)$ with smooth boundary
$$\frac{A(\partial\Omega)}{\Vol(\Omega)^\frac{n-1}{n}}\geq C_Y.$$
\end{theorem}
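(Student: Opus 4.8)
The plan is to deduce the isoperimetric inequality from a local volume comparison together with a local Poincar\'e inequality, both of which survive under the integral curvature bound with constants controlled by $n$, $v$, $r$ and the smallness of $K_Y$ (an alternative is to localize Gallot's symmetrization directly). It is convenient to restate the claim analytically: by mollifying $\mathbf 1_\Omega$ and applying the co-area formula, the asserted inequality is equivalent to a Dirichlet $L^1$-Sobolev inequality
$$\|f\|_{L^{n/(n-1)}(B(x,r))}\ \le\ C_Y^{-1}\,\|\nabla f\|_{L^1(B(x,r))},\qquad f\in C_c^\infty(B(x,r)),$$
with constant depending only on $n$, $v$, $r$ and on how small $K_Y$ is; so it suffices to produce such an inequality.

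The first ingredient is a local relative volume comparison under the integral bound: for $y\in B(x,r)$ and $0<s\le t\le r$ one has $\Vol B(y,t)/\Vol B(y,s)\le C_n(t/s)^n$, with $C_n$ tending to the Euclidean value as $K_Y\to 0$. This is the Bochner--Riccati comparison: along a minimizing geodesic from $y$ the mean curvature $m(\tau)=\Delta\,d(y,\cdot)$ of the geodesic spheres satisfies $m'+m^2/(n-1)\le\rho_-$; subtracting the flat solution $(n-1)/\tau$ and integrating the resulting linear differential inequality, the excess $(m-(n-1)/\tau)_+$ is controlled in $L^1$ along each ray by $\int_0^{2r}\rho_-(\gamma(\tau))\,\drm\tau$. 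Integrating over directions and applying H\"older's inequality with exponent $p$ then bounds the relevant global quantity by $C(n,p,r)\,\big(\int_{B(y,2r)}\rho_-^p\big)^{1/p}=O(K_Y^{1/p})$; here $p>n/2$ is exactly what makes the radial integrals converge and the H\"older step close (equivalently, it places the normalized quantity $r^{2}(\Vol B)^{-1}\!\int_B\rho_-^p$ in the scale-subcritical regime). Together with $\Vol B(x,r)\ge v$ this gives volume doubling at scales $\lesssim r$ near $x$, and a matching lower bound $\Vol B(y,s)\ge c\,v\,(s/r)^n$.

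The second ingredient is a local Neumann--Poincar\'e inequality, $\int_{B(y,s)}|f-f_{B(y,s)}|\le C\,s\int_{B(y,2s)}|\nabla f|$ for $y\in B(x,r)$ and $s\lesssim r$, with $C$ again controlled by $n$ and the smallness of $K_Y$. This follows from a segment / density-of-geodesics inequality of Cheeger--Colding type, whose proof rests on the same Jacobian comparison: the measure of minimizing geodesics joining two subsets is bounded below once the excess of $\Delta d$ has small $L^1$-norm, which the curvature integral guarantees. (Alternatively one may run Gallot's symmetrization directly on $B(x,r)$, estimating the volume of a tubular neighborhood of $\partial\Omega$ via Heintze--Karcher with the Ricci term absorbed into the integral bound.) Finally, volume doubling together with the $L^1$-Poincar\'e inequality implies, by the standard argument of Saloff-Coste (or a Moser iteration), the desired Dirichlet $L^1$-Sobolev inequality on $B(x,r)$ with dimension $n$; tracking constants then produces the explicit $C_Y$ and fixes how small $K_Y$ must be.

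The main obstacle is quantitative bookkeeping rather than any single difficult estimate: every comparison step must degrade \emph{continuously} as $K_Y\to0$, so that the doubling and Poincar\'e constants---and hence $C_Y$---depend only on $n$, $r$, $v$ and a smallness threshold for $K_Y$. The crucial point is the weighted $L^1$-control of the excess $(\Delta d-(n-1)/\tau)_+$ over the ball in terms of $\big(\int_{B(x,3r)}\rho_-^p\big)^{1/p}$: this is where the enlarged radius $3r$ and the restriction $p>n/2$ genuinely enter, and once it is in place the volume comparison, the Poincar\'e inequality, and the passage to the Sobolev inequality are by now routine.
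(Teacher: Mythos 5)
Your proposal takes a genuinely different route from Yang's argument, which is the one the paper leans on for the cited statement and again when it generalizes it in Theorem~\ref{prop:isoperimetric}. Yang's method (and the paper's adaptation of it) is purely comparison-geometric: Croke's inequality reduces the isoperimetric ratio of $\Omega\subset B(x,r)$ to a lower bound on the ``visibility angle'' $\hat\omega$, the measure of directions in $S_{x}$ along which minimizing geodesics escape $B(x_0,r)$, and $\hat\omega$ is then bounded from below by comparing an upper bound on $\Vol(\Gamma(\hat S,\cdot))$ coming from the Riccati/Heintze--Karcher comparison with a lower bound coming from the Petersen--Wei relative volume estimate and the non-collapsing hypothesis $V(x,r)\ge v$. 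There is no Poincar\'e inequality, no Moser iteration, and no passage through an $L^1$-Sobolev inequality; explicit constants fall out because each step is a direct inequality. Your route instead first translates the isoperimetric inequality into a Dirichlet $L^1$-Sobolev inequality, and then tries to obtain it from local volume doubling plus a local $(1,1)$-Poincar\'e inequality together with $V(x,r)\ge v$. That is a legitimate alternative framework (doubling $+$ $(1,1)$-Poincar\'e $+$ a volume lower bound does yield the dimension-$n$ $L^1$-Sobolev inequality for compactly supported functions), but it is substantially heavier and buys less here: the $(1,1)$-Poincar\'e (or segment) inequality under \emph{integral} Ricci bounds is itself a nontrivial result of Petersen--Wei/Tian--Zhang/Dai--Wei--Zhang type, roughly as deep as the cone volume estimate it would replace, and the doubling-plus-Poincar\'e-implies-Sobolev machinery produces constants that are much harder to make explicit than Yang's direct chain. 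Two smaller inaccuracies worth flagging: the crude $L^1$ control of the excess $(m-(n-1)/\tau)_+$ by $\int\rho_-$ along rays does not by itself give the Petersen--Wei volume comparison --- their proof really needs the weighted $L^{2p}$ bound on the excess, which is where $p>n/2$ enters quantitatively --- and the lower volume bound you quote, $\Vol B(y,s)\gtrsim v\,(s/r)^n$, should have exponent $\eta$ (reverse doubling) rather than $n$; this does not derail the argument but matters when you track the Sobolev exponent. So: the plan is defensible, but it uses much more machinery than Yang's Croke-plus-cone-volume method and would need considerably more care to deliver the explicit constants $K_Y$ and $C_Y$ that the statement promises.
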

Unfortunately, this theorem provides a result for the heat kernel only in the non-collapsing case, where one assumes that the volumes of balls cannot approach zero. \\
The purpose of this article is to show that on complete connected manifolds, either compact or not, one can generalize Yang's result to the collapsing case under the assumption that one has locally uniform $L^p$-bounds on the negative part of Ricci curvature. A precise definition will be given in the next section. The main observation which is necessary for the proof was made in \cite{PetersenWei-00}. Assuming a locally uniform $L^p$-bound on the Ricci curvature yields a local volume doubling condition for balls. This is sufficient to generalize Theorem\ref{Yang} to the collapsing case, which is one of our main results. The result implies  a local version of Lemma 7.16 in \cite{Grigoryan-99}, which provides lower bounds for fractions of volumes of balls. We then improve Theorem \ref{Yang} to the collapsing case with explicit dependence on all parameters. The local uniformity of the curvature yields local isoperimetric inequalities in every ball of the same radius. A covering technique and a result of \cite{Grigoryan-09} then imply the heat kernel bound with the right local dimension in a quantitative way. In particular, in the compact case we cover Theorem \ref{Gallot} and obtain the local dimension $n$.\\
Upper bounds of the heat kernel for small times allow to characterize certain function spaces.  In \cite{GueneysuPost-13} the authors show that under certain conditions of the heat kernel, $L^p$-spaces are in the so called Kato-class. Let us recall the statement briefly. Let $M$ be a Riemannian manifold of dimension $n$ and denote by $(P_t)_{t\geq 0}$ the heat semigroup on $M$ and $p_t(\cdot,\cdot)$ its minimal heat kernel. For a non-negative measurable function $V\colon M\to\RR_+$ and $\beta>0$ let
$$b_{\rm{Kato}}(V,\beta):=\sup_{n\in \NN}\int_0^\beta\Vert P_t(V\wedge n)\Vert_\infty \drm t\text.$$
We say that a function $V\colon M\to \CC$ is in the Kato class $\mathcal{K}(M)$ if 
$$\lim_{\beta\to 0}  b_{\rm{Kato}}(\vert V\vert,\beta)=0\text.$$ 
\begin{theorem}[\cite{GueneysuPost-13}]
Let $M$ be a Riemannian manifold of dimension $n\geq 3$ and $p>n/2$. Assume that there are $t_0, C>0$ such that
\begin{align}
\sup_{x\in M} p_t(x,x)\leq Ct^{-n/2}, \quad 0<t\leq t_0\text.
\end{align}
Then we have 
\begin{align}\label{LpsubsetKato}
L^p(M)+L^\infty(M)\subset \mathcal{K}(M)\text.
\end{align}
\end{theorem}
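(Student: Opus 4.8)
The plan is to reduce the whole statement to a single $L^p\to L^\infty$ smoothing estimate for the heat semigroup, which is an elementary consequence of the assumed on-diagonal bound. First I would dispose of the easy pieces. If $V\in L^\infty(M)$, then, since $(P_t)_{t\ge0}$ is sub-Markovian, $\|P_t(|V|\wedge m)\|_\infty\le\||V|\wedge m\|_\infty\le\|V\|_\infty$ for every $m\in\NN$, hence $b_{\mathrm{Kato}}(|V|,\beta)\le\beta\|V\|_\infty\to0$ as $\beta\to0$, so $L^\infty(M)\subset\mathcal{K}(M)$. For a sum $V=V_1+V_2$ with $V_1\in L^p(M)$ and $V_2\in L^\infty(M)$ one has the pointwise inequality $|V|\wedge m\le|V_1|\wedge m+|V_2|\wedge m$ (because $(a+b)\wedge m\le a\wedge m+b\wedge m$ for $a,b\ge0$), and since $P_t$ is linear and positivity preserving, $\|P_t(|V|\wedge m)\|_\infty\le\|P_t(|V_1|\wedge m)\|_\infty+\|P_t(|V_2|\wedge m)\|_\infty$; integrating in $t$ and taking the supremum over $m$ yields $b_{\mathrm{Kato}}(|V|,\beta)\le b_{\mathrm{Kato}}(|V_1|,\beta)+b_{\mathrm{Kato}}(|V_2|,\beta)$. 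Thus it remains to prove $L^p(M)\subset\mathcal{K}(M)$.

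Next I would upgrade the on-diagonal bound to a uniform bound on the full kernel. By the semigroup property $\int_M p_{t/2}(x,z)^2\,\dvol(z)=p_t(x,x)$, so Cauchy--Schwarz gives $p_t(x,y)\le p_t(x,x)^{1/2}p_t(y,y)^{1/2}\le Ct^{-n/2}$ for $0<t\le t_0$. Then, for $f\in L^p(M)$, Hölder's inequality with exponents $p$ and $p'=p/(p-1)$, together with
\[
\int_M p_t(x,y)^{p'}\,\dvol(y)\le\Bigl(\sup_{y}p_t(x,y)\Bigr)^{p'-1}\int_M p_t(x,y)\,\dvol(y)\le(Ct^{-n/2})^{p'-1},
\]
leads to the smoothing estimate
\[
\|P_tf\|_\infty\le(Ct^{-n/2})^{1/p}\,\|f\|_p=C^{1/p}\,t^{-n/(2p)}\,\|f\|_p,\qquad 0<t\le t_0 .
\]
Applying this with $f=|V|\wedge m\le|V|$ and using that $p>n/2$, hence $n/(2p)<1$, for $\beta\le t_0$ I obtain
\[
\int_0^\beta\|P_t(|V|\wedge m)\|_\infty\,\drm t\le C^{1/p}\|V\|_p\int_0^\beta t^{-n/(2p)}\,\drm t=\frac{C^{1/p}\|V\|_p}{1-n/(2p)}\,\beta^{\,1-n/(2p)} .
\]
This bound is independent of the truncation level $m$, so $b_{\mathrm{Kato}}(|V|,\beta)$ is dominated by the right-hand side, which tends to $0$ as $\beta\to0$; hence $V\in\mathcal{K}(M)$, completing the proof.

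I do not expect a serious obstacle here: the only genuine inputs are the elementary upgrade of the diagonal bound via the semigroup property, the $L^p$--$L^\infty$ smoothing inequality, and the integrability of $t^{-n/(2p)}$ near $0$, which is precisely where the hypothesis $p>n/2$ is used. The mild points to watch are that stochastic completeness is not required (the sub-Markovian inequality $\int_M p_t(x,\cdot)\,\dvol\le1$ suffices) and that every estimate is uniform in $m$, so it survives the supremum in the definition of $b_{\mathrm{Kato}}$.
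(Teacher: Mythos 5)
The paper does not reproduce a proof of this theorem; it is cited from G\"uneysu--Post and used as a black box, so there is no internal argument to compare against. Your proof, however, is the standard one and is correct in all its steps: the sub-Markovian bound disposes of the $L^\infty$ part; the pointwise subadditivity $(a+b)\wedge m\leq a\wedge m+b\wedge m$ together with positivity of $P_t$ reduces the sum to the two summands; the upgrade $p_t(x,y)\leq p_t(x,x)^{1/2}p_t(y,y)^{1/2}$ via the semigroup identity $\int p_{t/2}(x,z)^2\,\dvol(z)=p_t(x,x)$ and Cauchy--Schwarz is exactly right; the interpolation $\int p_t(x,\cdot)^{p'}\leq(\sup p_t(x,\cdot))^{p'-1}\int p_t(x,\cdot)\leq(Ct^{-n/2})^{p'-1}$ combined with H\"older gives the ultracontractive estimate $\Vert P_t\Vert_{p,\infty}\leq C^{1/p}t^{-n/(2p)}$; and the integrability of $t^{-n/(2p)}$ near $0$, guaranteed precisely by $p>n/2$, together with the $m$-uniformity of the estimate, closes the argument. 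The one point worth making explicit, which you flag at the end, is that $\int_M p_t(x,\cdot)\,\dvol\leq1$ (not necessarily $=1$) suffices; no stochastic completeness is assumed.
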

Note that the local dimension $n$ of the heat kernel does not play a significant role to obtain qualitative results like \eqref{LpsubsetKato} and could also be replaced by another constant, affecting the choice of $p$. In the recent paper \cite{RoseStollmann-15} Peter Stollmann and the author used the so-called extended Kato class to derive bounds on topological invariants for compact manifolds. A measurable function $V\colon M\to \CC$ is in the extended Kato class if for some $\beta>0$
$$b_{\rm{Kato}}(\vert V\vert,\beta)<1\text.$$
Using Theorem \ref{Gallot} it was shown that $b_{\rm{Kato}}(\rho_-,\beta)$ can be bounded in terms of the averaged $L^p$-norm of $\rho_-$, leading to bounds on the first Betti number via semigroup domination. The result shows how the upper bound scales with the involved quantities, but it is not explicit. The local results of the present article give explicit bounds on the first Betti number with a locally uniform $L^p$-bound on the negative part of the Ricci curvature.
\section{Preliminaries}
In this article $M$ is always a complete connected Riemannian manifold without boundary of dimension $n$. For $x\in M$ and $r>0$ 
let $B(x,r)$ be the open ball around $x$ with radius $r$. 
Denote by $\dvol$ the volume measure of $M$ and for $x\in M$ and $r>0$ let $V(x,r)$ the volume of $B(x,r)$. Moreover, $A(H)$ gives the $(n-1)$-dimensional measure of a hypersurface $H\subset M$. 
For any $x\in M$ and $r>0$ we define the local isoperimetric constant for $B(x,r)$ to be the quantity
$$I(x,r):=\inf \frac{A(\partial \Omega)}{\Vol(\Omega)^{\frac{n-1}{n}}}\text,$$
where $\Omega$ runs over all relatively compact subset of $B(x,r)$ with smooth boundary. \\
Let $\rho\colon M\to \RR$ the smallest 
eigenvalue function of the Ricci tensor and write $\rho_-=\sup(0,-\rho)$. We define for $p>n/2$ the quantities
\begin{align*}
\kappa(p,x_0,s)&:=\int_{B(x_0,s)}\rho_-^{p}\dvol,\\[1.5ex]
\bar\kappa(p,x_0,s)&:=\frac {\kappa(p,x_0,s)}{V(x_0,s)},\\[1.5ex]
\bar\kappa(p,s)&:=\sup_{x_0\in M}\left(\bar\kappa(p,x_0,s)\right)^{1/p}.
\end{align*}
Note that it is convenient to work with the scale invariant quantity $R^2\bar\kappa(p,R)$, as explained in 
\cite{PetersenWei-00}.
If there is a lower Ricci curvature bound, $R^2\bar\kappa(p,R)$ is small for small $R$.
For convenience, we recall an important fact about the quantity $R^2\bar\kappa(p,R)$, and quote the following statement.
\begin{proposition}[\cite{PetersenWei-00}, Theorem 2.1]\label{prop:PetersenWei}
Assuming that $p>n/2$, there are explicit constants $\varepsilon=\varepsilon(n,p)>0$ and $D=D(p)>0$ such that for any $R>0$ with 
$$R^2\bar\kappa(p,R)< \varepsilon$$ 
we have for all $x\in M$ and $0<r< s\leq R$
$$D\frac{V(x,s)}{V(x,r)}\leq \left(\frac sr\right)^n\text.$$
In particular we can set $\varepsilon=\frac{(2p-n)^{p+1}}{2n(n-1)(2p-1)}$ and $D=\left(\frac{2-\sqrt 2}{2}\right)^{2p}$.
\end{proposition}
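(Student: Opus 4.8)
The plan is to pass to geodesic polar coordinates around a fixed $x\in M$, run a Riccati comparison against the flat model ($\Ric\ge0$), convert the integral curvature bound into an $L^{2p}$-bound on the excess mean curvature of geodesic spheres, and then integrate that bound into an almost-monotonicity of $t\mapsto V(x,t)/t^{n}$. Write the volume element of $M$ on the star-shaped segment domain $\{0<t<c(\theta)\}$ ($c(\theta)$ the cut distance) as $\mathcal{A}(t,\theta)\,\drm t\,\drm\theta$, let $m(t,\theta):=\partial_t\log\mathcal{A}(t,\theta)$ be the mean curvature of the geodesic sphere through $\exp_x(t\theta)$, and compare with the model data $\mathcal{A}_H(t)=t^{n-1}$, $m_H(t)=(n-1)/t$. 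Put $f:=(m-m_H)_+$, regarded as a function on $B(x,R)$ off the cut locus. Since the assertion $D\,V(x,s)/V(x,r)\le(s/r)^{n}$ is equivalent to saying that $g(t):=V(x,t)/t^{n}$ grows by at most a factor $1/D$ on $(0,R]$, everything reduces to controlling $g$.

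The first step is the curvature estimate
\[
\int_{B(x,R)}f^{2p}\,\dvol\ \le\ C_1^{p}\,\kappa(p,x,R),\qquad C_1:=\frac{(2p-1)(n-1)}{2p-n}.
\]
Along a minimizing geodesic the Riccati inequality for the mean curvature gives $m'+\tfrac{m^{2}}{n-1}\le-\Ric(\partial_t,\partial_t)\le\rho_-$, and subtracting the model identity $m_H'+\tfrac{m_H^{2}}{n-1}=0$ yields $f'\le\rho_--\tfrac{2}{t}f-\tfrac{f^{2}}{n-1}$ on $\{f>0\}$. I would multiply this by $(2p-1)f^{2p-2}$, use $\mathcal{A}'=m\mathcal{A}$ and $m=f+(n-1)/t$ on $\{f>0\}$, and integrate against $\mathcal{A}\,\drm t$ over $0<t<\min\{R,c(\theta)\}$ (the endpoint term $f^{2p-1}\mathcal{A}$ is classical and nonnegative, and vanishes if the cut point is conjugate, so it may be discarded). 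This is exactly where $p>n/2$ enters: it makes the coefficients $2(2p-1)-(n-1)=4p-n-1$ and $\tfrac{2p-1}{n-1}-1=\tfrac{2p-n}{n-1}$ strictly positive, so the two ``bad'' terms can be dropped, leaving $\int f^{2p}\mathcal{A}\,\drm t\le C_1\int\rho_-f^{2p-2}\mathcal{A}\,\drm t$. Hölder with exponents $p$ and $\tfrac{p}{p-1}$ (legitimate since $p>1$) gives $X\le C_1 Y^{1/p}X^{(p-1)/p}$ with $X=\int f^{2p}\mathcal{A}\,\drm t$, $Y=\int\rho_-^{p}\mathcal{A}\,\drm t$, hence $X\le C_1^{p}Y$; integrating over $\theta\in S^{n-1}$ then yields the displayed bound.

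The second step turns this into a differential inequality for $g$. With $a(t,\theta):=\mathcal{A}(t,\theta)/t^{n-1}$ one has $a'=a\,(m-m_H)\le af$, and combining the elementary identity $t^{n}a(t,\theta)-n\int_0^{t}u^{n-1}a\,\drm u=\int_0^{t}u^{n}a'\,\drm u$ (which, after a brief case distinction, persists for $t\ge c(\theta)$, its left-hand side then only decreasing) with $u^{n}a'\le u\,\mathcal{A}f$ gives, for a.e.\ $t\in(0,R]$,
\[
\frac{\drm}{\drm t}\,g(t)\ \le\ \frac{1}{t^{n}}\int_{B(x,t)}f\,\dvol .
\]
Feeding in the curvature estimate via $\int_{B(x,t)}f\,\dvol\le V(x,t)^{1-1/(2p)}\bigl(\int_{B(x,t)}f^{2p}\bigr)^{1/(2p)}\le C_1^{1/2}\,\kappa(p,x,t)^{1/(2p)}\,V(x,t)^{1-1/(2p)}$, bounding $\kappa(p,x,t)\le\kappa(p,x,R)\le\bar\kappa(p,R)^{p}\,V(x,R)$, and substituting $V(x,t)=t^{n}g(t)$, $V(x,R)=R^{n}g(R)$ and $\mu:=1/(2p)$, one reaches
\[
\frac{\drm}{\drm t}\,g(t)^{\mu}\ \le\ \mu\,C_1^{1/2}\,\bar\kappa(p,R)^{1/2}\,R^{n\mu}\,g(R)^{\mu}\,t^{-n\mu}.
\]
Here $p>n/2$ is used once more: $n\mu=n/(2p)<1$, so $t^{-n\mu}$ is integrable at $0$ and $R^{n\mu}s^{1-n\mu}\le R$ for $s\le R$. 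Integrating from $r$ to $s$ gives $g(s)^{\mu}-g(r)^{\mu}\le\delta\,g(R)^{\mu}$ with $\delta:=\tfrac{C_1^{1/2}}{2p-n}\sqrt{R^{2}\bar\kappa(p,R)}$; taking $s=R$ and solving yields $g(R)^{\mu}\le g(r)^{\mu}/(1-\delta)$, and then for all $0<r<s\le R$
\[
\Bigl(\tfrac{g(s)}{g(r)}\Bigr)^{\mu}\le\frac{1}{1-\delta},\qquad\text{equivalently}\qquad (1-\delta)^{2p}\,\frac{V(x,s)}{V(x,r)}\le\Bigl(\tfrac{s}{r}\Bigr)^{\!n}.
\]
Choosing $\epsilon=\epsilon(n,p)>0$ so small that $R^{2}\bar\kappa(p,R)<\epsilon$ forces $\delta\le 1-\tfrac{2-\sqrt{2}}{2}$ (in particular $\delta<1$) then finishes the proof with $D:=\bigl(\tfrac{2-\sqrt{2}}{2}\bigr)^{2p}\le(1-\delta)^{2p}$; carrying $C_1$ through the estimates makes $\epsilon=\epsilon(n,p)$ explicit.

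Beyond the routine but delicate cut-locus bookkeeping in the first step --- integrating a classical ODE inequality on each open segment and checking the endpoint term has the favorable sign --- the genuinely substantive difficulty is the self-improving nature of the second step: $V(x,R)$ appears on both sides of the differential inequality, and the argument closes only because $p>n/2$ keeps the driving weight $t^{-n/(2p)}$ bounded and integrable, so that the feedback coefficient $\delta$ is controlled purely by the scale-invariant quantity $R^{2}\bar\kappa(p,R)$ and can be forced below $1$.
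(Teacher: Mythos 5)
The paper does not reprove this statement; it simply cites Petersen--Wei (2001, Theorem 2.1). Your reconstruction follows the Petersen--Wei argument in its standard form and is, as far as I can check, correct: the Riccati comparison giving $f' \le \rho_- - \tfrac{2}{t}f - \tfrac{f^2}{n-1}$ on $\{f>0\}$, the integration against $(2p-1)f^{2p-2}\mathcal{A}\,\drm t$ with both remaining coefficients $\tfrac{2p-n}{n-1}$ and $4p-n-1$ positive because $p>n/2$, the H\"older step yielding $\int f^{2p}\,\dvol\le C_1^p\kappa(p,x,R)$ with $C_1=\tfrac{(n-1)(2p-1)}{2p-n}$, the distributional/cut-locus bookkeeping for $a'\le af$, the resulting $g'(t)\le t^{-n}\int_{B(x,t)}f\,\dvol$, and the closure of the self-improving inequality $g(s)^\mu-g(r)^\mu\le\delta\,g(R)^\mu$ by taking $s=R$ first are all carried out correctly. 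The derivation of $D=\bigl(\tfrac{2-\sqrt 2}{2}\bigr)^{2p}$ from $\delta\le 1-\tfrac{2-\sqrt 2}{2}=\tfrac{\sqrt 2}{2}$ is exactly right.

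One small discrepancy worth flagging: carrying your $C_1$ all the way through gives
\[
\delta=\frac{C_1^{1/2}}{2p-n}\sqrt{R^2\bar\kappa(p,R)},\qquad\text{hence}\qquad
\varepsilon=\frac{(2p-n)^2}{2C_1}=\frac{(2p-n)^3}{2(n-1)(2p-1)},
\]
whereas the paper quotes $\varepsilon=\tfrac{(2p-n)^{p+1}}{2n(n-1)(2p-1)}$ from Petersen--Wei. The two agree only at $n=p=3$ and differ in general; your bookkeeping appears internally consistent, so the difference reflects either a non-sharp intermediate step in the original source or a transcription issue in the paper's quotation. Since you explicitly left $\varepsilon$ as ``explicit but not optimized,'' this does not affect the correctness of your argument, but if you want to match the cited constant verbatim you would need to trace the exact inequalities Petersen and Wei used in their Lemma 2.2 and Theorem 2.1.
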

\begin{remark}
The locally uniform bound of the Ricci curvature and the local volume doubling property above allow to compare $r^2\bar\kappa(p,r)$ for different values $r$. Assume that 
the assumptions of Proposition \ref{prop:PetersenWei} are satisfied. Then it is a direct consequence that for all $0<r_1<r_2\leq R$ 
\begin{align}\label{scaling1}
r_1^2\bar\kappa(p,r_1)&\leq D^{-1}\left(\frac{r_2}{r_1}\right)^{\frac np -2}r_2^2\bar\kappa(p,r_2)\text.
\end{align}
The other direction looks similar and can be seen by a packing argument. Assume that $0<r_1<r_2$ and that $r_1^2\bar\kappa(p,r_1)<\varepsilon$. Then
\begin{align}\label{scaling2}
r_2^2\bar\kappa(p,r_2)\leq \left(\frac{2^n}{D}\right)^\frac{1}{p}\left(\frac{r_2}{r_1}\right)^2r_1^2\bar\kappa(p,r_1)\text.
\end{align}
\end{remark}
\section{Local isoperimetric inequalities}

Assuming that the Ricci curvature integral bounds are uniformly small provides local isoperimetric inequalities. 
\begin{theorem}\label{prop:isoperimetric}
Let $2p>n\geq 3$ and $r>0$ and assume that $r<\diam(M)$. 
There are explicit constants $K_L=K_L(n,p,r)>0$ and $C_I=C_I(n,p,r)>0$ such that if 
\begin{equation}
r^2\bar\kappa(p,r)< K_L\text,
\end{equation}
for all $x_0\in M$ we have
\begin{equation}\label{eq:isoinequ}
I(x_0,r)\geq C_I\, \left(\frac{V(x_0,r)}{(1+r^{2p})r^n}\right)^{1+1/n}\text.
\end{equation}
\end{theorem}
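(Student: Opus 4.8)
The strategy is to reduce the collapsing case to Yang's non-collapsing result (Theorem~\ref{Yang}) by rescaling, using the local volume doubling property of Proposition~\ref{prop:PetersenWei} to control how the relevant quantities transform. The point is that Theorem~\ref{Yang} requires a uniform lower volume bound $V(x,r)\geq v$, which fails under collapsing; but if we rescale the metric by a factor $\lambda^{-2}$ (so distances are multiplied by $\lambda^{-1}$), volumes scale like $\lambda^{-n}$, Ricci curvature scales like $\lambda^{2}$, and the integral $\int_{B}\rho_-^p\,\dvol$ scales like $\lambda^{2p-n}$. Choosing $\lambda$ adapted to $V(x_0,r)$ should bring the volume of the unit ball up to a fixed size $v=v(n)$, while the curvature smallness hypothesis $r^2\bar\kappa(p,r)<K_L$ — which is scale-invariant — guarantees that the rescaled integral bound $\int_{B(x,3)}\rho_-^p\,\dvol<K_Y$ needed by Yang is met, provided $K_L$ is chosen small enough in terms of $n$, $p$, and the doubling constant $D$. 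Here the scaling relations \eqref{scaling1}--\eqref{scaling2} and Proposition~\ref{prop:PetersenWei} are used to pass between the ball of radius $r$ and the balls of radius $3r$ (or the rescaled radius $3$) that appear in Yang's hypothesis.

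\textbf{Carrying it out.} First I would fix $x_0$ and set $\lambda:=\bigl(V(x_0,r)/(c_n r^n)\bigr)^{1/n}$ for a suitable dimensional constant $c_n$, and consider the rescaled metric $g'=\lambda^{-2}g$ on $B(x_0,r)$; under $g'$ the ball $B(x_0,r)$ becomes a ball of radius $r/\lambda$, its volume becomes $c_n(r/\lambda)^n$, and we have arranged that the volume of the unit $g'$-ball around $x_0$ is bounded below by a fixed $v(n)$ — here one must invoke doubling once more to transfer a lower bound on $V(x_0,r)$ to a lower bound on $V(x_0,1)$ in the rescaled metric. Second, I would check the curvature hypothesis: the scale-invariance of $s^2\bar\kappa(p,s)$ together with \eqref{scaling1} gives control of $(r/\lambda)^2\bar\kappa'(p,r/\lambda)$, hence of $\int_{B'(x_0,3)}(\rho_-')^p\,\dvol'$, in terms of $r^2\bar\kappa(p,r)$ times a power of $r/\lambda$; choosing $K_L$ small makes this less than $K_Y$. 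Third, apply Theorem~\ref{Yang} in the rescaled metric to obtain, for all relatively compact $\Omega'\subset B'(x_0,1)$ with smooth boundary, $A'(\partial\Omega')\geq C_Y\Vol'(\Omega')^{(n-1)/n}$. Finally, I would undo the rescaling: $A(\partial\Omega)=\lambda^{n-1}A'(\partial\Omega')$ and $\Vol(\Omega)=\lambda^n\Vol'(\Omega')$, so the inequality becomes $I(x_0,r)\geq C_Y\lambda^{-1}$, and substituting $\lambda^{-1}=\bigl(c_nr^n/V(x_0,r)\bigr)^{1/n}$ produces $I(x_0,r)\geq C_I (r^n/V(x_0,r))^{1/n}$. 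The extra factor $(1+r^{2p})$ and the exponent $1+1/n$ in \eqref{eq:isoinequ} presumably arise because one cannot rescale to radius exactly $1$ when $r$ is large relative to the curvature scale: one must instead work at the fixed radius $r$ (or cap the rescaling), and then the powers of $r$ coming from \eqref{scaling1}--\eqref{scaling2} accumulate in the constant and in the exponent, which is why the stated inequality is weaker than the clean $\lambda^{-1}$ bound.

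\textbf{Main obstacle.} The delicate point is the bookkeeping of the rescaling when $r$ is not small: Yang's theorem is stated with the balls $B(x,r)$ and $B(x,3r)$ at a comparable scale, and the curvature smallness must survive the passage from scale $r$ to scale $r/\lambda$ (or $3r/\lambda$). Since $\lambda$ depends on the possibly tiny volume $V(x_0,r)$, the rescaled radius $r/\lambda$ can be large, and one needs \eqref{scaling1} — together with the hypothesis $r<\diam(M)$, which rules out degenerate situations where doubling has nothing to act on — to keep $(r/\lambda)^2\bar\kappa'(p,r/\lambda)$ below Yang's threshold $K_Y$; making this quantitative, with all constants explicit in $n$, $p$, and $r$, is where the real work lies, and it is the source of the $(1+r^{2p})$ factor and the shifted exponent in the conclusion.
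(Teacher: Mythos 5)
Your rescaling plan does not work, because the quantity you need to normalize, the density $V(x_0,r)/r^n$, is scale-invariant. If $g'=\lambda^{-2}g$, then $V'(x_0,\rho)=\lambda^{-n}V(x_0,\lambda\rho)$, so $V'(x_0,\rho)/\rho^n = V(x_0,\lambda\rho)/(\lambda\rho)^n$: the density function is unchanged, only reparametrized. With your choice $\lambda^n = V(x_0,r)/(c_n r^n)$ you do get $V'(x_0,r/\lambda)=c_n r^n$, but the rescaled radius $r/\lambda$ is then huge in the collapsing case, and the ball of that radius is exactly as collapsed as the original one. In particular your claim that ``the volume of the unit $g'$-ball around $x_0$ is bounded below by a fixed $v(n)$'' is false: the doubling estimate from Lemma~\ref{volumecomparison} gives $V'(x_0,1)\geq b^{-1}(\lambda/r)^n V'(x_0,r/\lambda)=b^{-1}c_n\lambda^n$, which tends to $0$ as $V(x_0,r)\to 0$. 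So after rescaling you cannot apply Theorem~\ref{Yang} at a fixed radius with a fixed volume floor, and the whole reduction collapses. (The alternative of applying Yang at the enormous radius $r/\lambda$ simply pushes the problem into the $(r,v)$-dependence of $C_Y$, which you would then have to make explicit by opening up Yang's proof anyway.)

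The paper takes the latter route from the start: it does not invoke Theorem~\ref{Yang} as a black box, but reruns Yang's argument keeping the density explicit. The ingredients are Croke's inequality, which reduces the isoperimetric bound to a lower bound on the angular measure $\hat\omega$ of a set of directions; Yang's cone-volume estimate (Proposition~\ref{prop:Yang}), giving an upper bound $\Vol(\Gamma(\hat S_{x_0},\rho))\lesssim(1+\tau)^{n-1}\hat\omega\,\rho^n$ once $\tau$ is tuned so only the first case of \eqref{uppervolumecone} occurs; and, crucially, the local doubling from Proposition~\ref{prop:PetersenWei} and Lemma~\ref{volumecomparison}, which replaces the missing volume floor and yields the lower bound $\Vol(\Gamma(\hat S_{x_0},s/3+r))\gtrsim V(x_0,r)$. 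Comparing the two bounds produces $\hat\omega\gtrsim V(x_0,r)/\bigl((1+r^{2p})r^n\bigr)$, and Croke then gives \eqref{eq:isoinequ}. The factor $(1+r^{2p})$ and the exponent $1+1/n$ come from the choice $\tau=s^{2p/(2p-1)}$ and from Croke's inequality, respectively, not from any accumulation of rescaling losses. So the collapsing is not eliminated by a conformal change; the density factor is simply carried through the estimates. You would need to abandon the rescaling reduction and redo the cone-volume comparison as in the paper.
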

In order to prove Theorem \ref{prop:isoperimetric} we use Yang's idea from \cite{Yang-92}. To overcome the issue of non-collapsing we use a local volume doubling result which is based on ideas from \cite{Grigoryan-99}. Bounding the ratio of the volumes of two intersecting balls in terms of their radii, the ratio will also be bounded from below.
\begin{lemma}\label{volumecomparison}
Let $\varepsilon>0$ be as in Proposition \ref{prop:PetersenWei} and $R>0$. Assume that $M$ is non-compact or $R\leq\diam(M)/3$. If
\[
R^2\bar\kappa(p,R)<\varepsilon\text,
\]
then there are explicit constants $a=a(n,p)>0$, $b=b(n,p)>0$ and $\eta=\eta(n,p)>0$ such that for all $0<r\leq s\leq R/3$ and balls $B(x,s)$ and $B(y,r)$ such that $\overline{B(x,r)}\cap\overline{B(y,s)}\neq \emptyset$
\begin{align}
a\left(\frac sr\right)^\eta\leq \frac{V(x,s)}{V(y,r)}\leq b\left(\frac sr \right)^n.
\end{align}
\end{lemma}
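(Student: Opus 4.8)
The plan is to derive both the upper and the lower bound from the local volume doubling property of Proposition \ref{prop:PetersenWei}, using the triangle inequality to compare the two balls $B(x,s)$ and $B(y,r)$ after enclosing one inside a concentric dilate of the other. First I would establish the upper bound. Since $\overline{B(x,r)}\cap\overline{B(y,s)}\neq\emptyset$ (equivalently $\dist(x,y)\le r+s\le 2s$), we have $B(x,s)\subset B(y,3s)$, because any point within distance $s$ of $x$ lies within distance $s+2s=3s$ of $y$. Hence $V(x,s)\le V(y,3s)$, and since $3s\le R$ we may apply Proposition \ref{prop:PetersenWei} to the concentric balls $B(y,r)\subset B(y,3s)$ to get $V(y,3s)/V(y,r)\le D^{-1}(3s/r)^n$. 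This gives the upper bound with $b=3^n/D$.

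For the lower bound the idea is symmetric but one direction of doubling has to be "reversed," which is the standard trick behind Lemma 7.16 of \cite{Grigoryan-99}: bounding a ratio of volumes of \emph{nested} balls from above forces a complementary lower bound on the ratio of volumes of \emph{concentrically} nested balls, because $V(y,r)\le V(y,2r)$ can be iterated. Concretely, from $\overline{B(x,r)}\cap\overline{B(y,s)}\neq\emptyset$ we also get $B(y,r)\subset B(x,3s)$ — wait, more carefully, $B(y,r)\subset B(x, r+r+s)\subset B(x,3s)$ — so $V(y,r)\le V(x,3s)$; comparing this against $V(x,s)$ via Proposition \ref{prop:PetersenWei} applied to $B(x,s)\subset B(x,3s)$ yields $V(x,3s)\le D^{-1}3^n V(x,s)$, hence $V(y,r)\le D^{-1}3^n V(x,s)$, i.e. $V(x,s)/V(y,r)\ge D\,3^{-n}$. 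This is already a lower bound with a constant, but not yet of the form $a(s/r)^\eta$ with a \emph{positive} power of $s/r$. To upgrade it, I would apply the reverse-doubling consequence of Proposition \ref{prop:PetersenWei}: iterating the doubling inequality on the dyadic scales between $r$ and $s$ inside the fixed ball $B(x,3s)$ (or $B(y,3s)$) produces $V(y,s)/V(y,r)\ge c\,(s/r)^\eta$ for an explicit $\eta=\eta(n,p)>0$ and $c=c(n,p)>0$ — this is precisely the reverse-volume-doubling statement. Combining with $V(x,s)\ge D\,3^{-n}V(y,s)$ (obtained as above with the roles of the centers handled by the same inclusion argument, since $B(y,s)\subset B(x,3s)$ and $B(x,s)\subset B(x,3s)$ are comparable) gives the desired $a(s/r)^\eta \le V(x,s)/V(y,r)$.

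The main obstacle I anticipate is making the reverse-doubling step quantitative and tracking the constant $\eta$ explicitly: the doubling bound $V(x,2r)\le D^{-1}2^nV(x,r)$ goes in the "wrong" direction for a growth statement, so one has to argue via the monotonicity $V(x,r)\le V(x,2r)$ together with the fact that $V(x,R)$ cannot be too large relative to $V(x,r)$, and convert a multiplicative gap on a single dyadic step into a power law by summing over $\lfloor\log_2(s/r)\rfloor$ steps. Care is also needed at the non-dyadic endpoints and to ensure all intermediate radii stay $\le R$ so that Proposition \ref{prop:PetersenWei} applies; the hypothesis $s\le R/3$ is exactly what guarantees $3s\le R$. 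Finally, I would double-check that the case distinction "$M$ non-compact or $R\le\diam(M)/3$" is only used to guarantee that balls of radius up to $R$ are nontrivial (nonempty with positive volume) so the ratios make sense; in the compact case $R\le\diam(M)/3$ ensures $B(x,3s)\subsetneq M$ is still a genuine ball argument, but since all inclusions above are between honest metric balls the estimates go through verbatim.
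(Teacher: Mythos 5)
Your upper bound argument is correct and matches the paper. The gap is in the lower bound: you claim the power-law lower bound $V(y,s)/V(y,r)\ge c(s/r)^\eta$ is a ``reverse-doubling consequence of Proposition \ref{prop:PetersenWei},'' to be extracted from ``monotonicity $V(x,r)\le V(x,2r)$ together with the fact that $V(x,R)$ cannot be too large relative to $V(x,r)$.'' This does not work. Proposition \ref{prop:PetersenWei} only gives an \emph{upper} doubling bound, and combining an upper doubling bound with monotonicity can never produce a strict multiplicative gap $V(x,cr)\ge(1+\delta)V(x,r)$ for a fixed $\delta>0$ --- nothing in those two facts rules out the volume function being essentially flat between $r$ and $s$. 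The multiplicative gap has to come from a genuinely geometric input, not from manipulating the doubling inequality.

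The missing idea is a packing argument using connectedness: since $M$ is connected and either non-compact or $R\le\diam(M)/3$ (so that $2r<\diam(M)$), there exists $\xi\in M$ with $d(x,\xi)=2r$. Then $B(\xi,r)$ is disjoint from $B(x,r)$ and contained in $B(x,3r)$, so $V(x,3r)\ge V(x,r)+V(\xi,r)$, and the \emph{upper} bound already established applied to the intersecting equal-radius balls $B(x,r)$, $B(\xi,r)$ gives $V(\xi,r)\ge b^{-1}V(x,r)$. This is what produces the gap $V(x,3r)\ge(1+b^{-1})V(x,r)$. Iterating in base $3$ over $k$ with $3^k\le s/r<3^{k+1}$ then yields $\eta=\log_3(1+b^{-1})$ and $a=(b+1)^{-1}$. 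Your reading of the hypothesis ``non-compact or $R\le\diam(M)/3$'' as merely ensuring the balls are nontrivial also misses this point: it is precisely what guarantees the existence of the point $\xi$ at distance $2r$ from $x$, which is the crux of the lower bound.
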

\begin{proof}{(following \cite{Grigoryan-99})}
We know that $d(x,y)\leq r+s\leq 2R/3$ and therefore Proposition \ref{prop:PetersenWei} implies
\begin{align*}
\frac{V(x,s)}{V(y,r)}\leq \frac{V(y,2s+r)}{V(y,r)}\leq \frac{V(y,3s)}{V(y,r)}\leq b\left(\frac sr\right)^n
\end{align*}
where
\[
b:=3^nD^{-2p}\text.
\]
Assume now $s=3r$ and $x=y$, then there is a $\xi\in M$ with $d(x,\xi)=2r$, $B(\xi,r)\cap B(x,r)=\emptyset$, such that, using the upper inequality for $V(\xi,r)$
\[
V(x,s)=V(x,3r)\geq V(x,r)+V(\xi,r)\geq\left(1+b^{-1}\right)V(x,r)\text.
\]
For the general case we know that there is a $k\in\NN$ such that
$$3^k\leq \frac sr<3^{k+1},$$
so we can conclude
\begin{align*}
V(x,s)\geq V(x,3^kr)&\geq V(x,r)\left(1+b^{-1}\right)^k\geq b^{-1}(1+b^{-1})^kV(y,r)\\[1.5ex]
&=b^{-1}(1+b^{-1})^{-1}(1+b^{-1})^{k+1}V(y,r)\\
&\geq aV(y,r)\left(\frac sr\right)^{\eta}
\end{align*}
where
\begin{align*}
a:=b^{-1}(1+b^{-1})^{-1}=(b+1)^{-1}\quad\text{and}\quad \eta :=\log_3(1+b^{-1})\text.
\end{align*}
\end{proof}
Since we want to follow the main steps of \cite{Yang-92} it seems convenient to recall some definitions and results from this text.
Let $x_0\in M$ and denote by $S_{x_0}\subset T_{x_0}M$ the unit sphere in the tangent space at $x_0$. By $c_m$ we denote the volume of the unit disc in $\RR^m$. Given a 
subset $\hat S\subset S_{x_0}$ and $\rho>0$ let
\[
 \Gamma(\hat S,\rho):=\{y=\exp_{x_0} r\theta\mid 0\leq r<\rho, \theta\in \hat S, d(x,y)=r\}
\]
the geodesic cone with base point $x_0$ and length $r$.

\begin{proposition}[see \cite{Yang-92}]\label{prop:Yang}
Let $M$ be a Riemannian manifold, $x_0\in M$, $\hat S\subset S_{x_0}$, $p>n/2$, $\tau>0$. Set 
 $\delta=\frac{2p-n}{2p-1}$, $\hat \omega=\Vol(\hat S)$, 
\[
  k:=\int_{\Gamma(\hat S,\rho)}\rho_-^p,
  \quad\text{and}\quad r_0:=\frac{\tau^{1/\delta}}{1+\tau}(h(n,p)^{-1}\hat\omega k^{-1})^{1/(2p-n)}.
 \]
Then, for every $0\leq r\leq\rho$
\begin{align}\label{uppervolumecone}
 \Vol(\Gamma(\hat S,r))\leq
 \begin{cases}
 (1+\tau)^{n-1}\hat\omega n^{-1}r^n\colon & 0\leq r\leq r_0,\\
 c(n,p,\tau)k((1-\delta)r+\delta r_0)^{2p} \colon & r\geq r_0.
 \end{cases}
\end{align}
Here, $h(n,p):=\left(2-1/p\right)^p\left(\frac{p-1}{2p-n}\right)^{p-1}\!\!$ 
and $c(n,p,\tau):=(1+\tau^{-1})^{2p-1}\frac{2p-1}{2p(n-1)}h(n,p)$.
\end{proposition}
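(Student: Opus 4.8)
The plan is to reduce the statement to a one–dimensional differential inequality along the geodesics issuing from $x_0$ and then to iterate it. Work in geodesic polar coordinates at $x_0$, writing the Riemannian volume as $\dvol=\beta(r,\theta)\,\drm r\,\drm\theta$ on the star–shaped domain below the cut locus and extending $\beta$ by zero beyond it, so that $\Vol(\Gamma(\hat S,r))=\int_{\hat S}\int_0^r\beta(t,\theta)\,\drm t\,\drm\theta$. Put $f:=\beta^{1/(n-1)}$. The Riccati comparison for the Jacobian of $\exp_{x_0}$ gives, along each unit–speed geodesic $t\mapsto\exp_{x_0}(t\theta)$,
\begin{align*}
f''+\frac{\Ric(\partial_r,\partial_r)}{n-1}\,f\le 0,\qquad f(0)=0,\quad f'(0)=1,\quad f\ge 0,
\end{align*}
and since $\Ric(\partial_r,\partial_r)\ge-\rho_-$ this yields $f''\le\frac{\rho_-}{n-1}f$; at the cut locus $f'$ only jumps downwards, so the inequality holds distributionally on $(0,\rho)$.

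Next I would pass to integral form: from $f(r)=r+\int_0^r(r-t)f''(t)\,\drm t$ one gets
\begin{align*}
f(r,\theta)\le r+\frac{1}{n-1}\int_0^r(r-t)\,\rho_-(t,\theta)\,f(t,\theta)\,\drm t .
\end{align*}
The curvature term is then estimated by H\"older's inequality with exponents $p$ and $p/(p-1)$, splitting $\rho_- f$ so that the volume density $\beta=f^{n-1}$ is exposed inside the $L^p$–factor; the assumption $2p>n$ is precisely what makes the remaining power–weighted integral in $t$ converge at the origin, and its value (a Beta integral) is what produces the constant $h(n,p)$. To make $k=\int_{\Gamma(\hat S,\rho)}\rho_-^p$ appear — rather than a fibrewise $L^p$–quantity — I would run this not fibrewise but for the averaged density $\bar f(r):=\bigl(\hat\omega^{-1}\int_{\hat S}\beta(r,\theta)\,\drm\theta\bigr)^{1/(n-1)}$, which obeys a comparable inequality by convexity of $s\mapsto s^{n-1}$ (Minkowski), and apply H\"older once more in $\theta$; this turns $\int_{\hat S}\bigl(\int_0^r\rho_-^p\beta\bigr)^{1/p}\drm\theta$ into $\hat\omega^{(p-1)/p}k^{1/p}$, so the whole correction is governed by $r$, $\hat\omega$ and $k$.

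I would then close by a two–regime continuity argument. Let $T\le\rho$ be the largest radius with $\bar f(t)\le(1+\tau)t$ on $[0,T]$. On $[0,T]$ the correction term above becomes, after inserting $\bar f\le(1+\tau)t$ into the iteration, a power of $r$ times $h(n,p)$ times a power of $k/\hat\omega$ (with the $(1+\tau)$ picked up from the bootstrap); demanding that it stay $\le\tau r$ pins down the break point, which is valid exactly for $r\le r_0$ with $r_0$ as stated — the prefactor $\tau^{1/\delta}/(1+\tau)$ and the exponent $1/(2p-n)$ being what the algebra returns once $\delta=\frac{2p-n}{2p-1}$ is identified with the combination of H\"older exponents. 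Hence $T\ge r_0$, so $\int_{\hat S}\beta(t,\theta)\,\drm\theta\le\hat\omega(1+\tau)^{n-1}t^{n-1}$ on $[0,r_0]$ and therefore $\Vol(\Gamma(\hat S,r))\le\hat\omega(1+\tau)^{n-1}\int_0^r t^{n-1}\,\drm t=(1+\tau)^{n-1}\hat\omega n^{-1}r^n$, the first branch. For $r\ge r_0$ one re-runs the integral inequality seeded with the value already controlled at $r_0$; now the Euclidean comparison is dropped and the full strength of $k$ is used, which replaces the linear growth by growth of order $\bigl((1-\delta)r+\delta r_0\bigr)^{2p}$ — the affine argument being the $\delta$-weighted interpolation between the $r_0$-data and the bare $L^p$ estimate — with $c(n,p,\tau)=(1+\tau^{-1})^{2p-1}\frac{2p-1}{2p(n-1)}h(n,p)$ gathering the numerical constants and making the two branches agree at $r_0$.

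The real work is the bookkeeping: arranging the H\"older exponents, the Beta-integral constant, the averaging over $\hat S$ and the break point $r_0$ so that the two branches meet continuously at $r=r_0$ and the stated $h(n,p)$, $r_0$, $c(n,p,\tau)$ emerge with no slack, while the non–smoothness of $\beta$ at the cut locus is harmless since $f'$ only drops there. No individual step is deep; the difficulty is quantitative precision.
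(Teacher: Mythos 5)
The paper does not contain a proof of this proposition: it is quoted from Yang \cite{Yang-92}, and the paper adds only the remark that the case $k=0$ follows from Bishop--Gromov. So there is no in-paper argument to compare against; I can only measure your sketch against the natural comparison-geometry route, which is indeed essentially Yang's (Riccati inequality for $f=\beta^{1/(n-1)}$, integral form, H\"older, bootstrap to produce $r_0$). On that score the skeleton you describe is the right one.

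Two concrete issues. First, your claim that $c(n,p,\tau)$ is chosen to make ``the two branches agree at $r=r_0$'' is arithmetically false. Evaluating both branches at $r=r_0$ with the stated $r_0$, $\delta$, $h(n,p)$, $c(n,p,\tau)$ gives
\[
\frac{(1+\tau)^{n-1}\hat\omega\, n^{-1}\, r_0^{\,n}}{c(n,p,\tau)\,k\, r_0^{\,2p}} \;=\; \frac{2p(n-1)}{n(2p-1)},
\]
which is strictly less than $1$ exactly when $2p>n$, i.e.\ in the entire parameter regime of the hypothesis. The branches therefore never match; this does not hurt the conclusion (both are upper bounds and no continuity is required), but it undermines the stated rationale for where $c(n,p,\tau)$ comes from.

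Second, and more seriously, the averaging step is not justified. You propose to run the bootstrap for $\bar f(r)=\bigl(\hat\omega^{-1}\int_{\hat S}\beta(r,\theta)\,\drm\theta\bigr)^{1/(n-1)}$ and claim $\bar f$ satisfies a ``comparable inequality by convexity/Minkowski.'' What Minkowski actually gives is
\[
\bar f(r) \;\le\; r + \frac{1}{n-1}\int_0^r (r-t)\,\bigl\|\rho_-(t,\cdot)\,f(t,\cdot)\bigr\|_{L^{n-1}(\hat S)}\,\drm t,
\]
and the $L^{n-1}(\hat S)$-norm of $\rho_-f$ does not separate into the desired factor $\bigl(\int\rho_-^p\beta\bigr)^{1/p}$ by a single H\"older step unless one also has $p>n-1$, which is strictly stronger than $p>n/2$. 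To make $k$ appear as stated one must perform the $t$-H\"older decomposition (exposing $\rho_-^p\beta$) fiberwise first, and then arrange the $\theta$-integration so that the exponents actually close; working directly with $\bar f$ and Minkowski does not obviously do this. Your sketch concedes that ``the real work is the bookkeeping,'' but that is precisely the place where the proposed averaging does not go through as written.
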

The proof of the Proposition above requires $k>0$ because it appears in the denominator of $r_0$. In the case $k=0$ the volume bound of the geodesic cone follows directly from the Bishop-Gromov inequality, \cite{Chavel-84}. Observe that in the case $\hat S_{x_0}=S_{x_0}$ it is $k=\kappa(p,x_0,r)$ for $\rho=r$.
We are now able to prove Proposition \ref{prop:isoperimetric}.
\begin{proof}[Proof of Theorem \ref{prop:isoperimetric}]
For the proof we distinguish between the non-compact and compact case. \\
\emph{The non-compact case:}
Define the constant 
$$G:=\left(\frac D{2^n}\right)^{1/p}(16(b+2)^{2/\eta})^{-1}\min\left\{\varepsilon, \left(\frac {D^{p+1}r^na}{c(n,p,\tau^{-1})}\right)^{\frac 1p}\left(4(b+2)^{1/\eta}\right)^{2}r^2\right\}\text,$$
with $\tau=\tau(n,p,r)>0$ to be chosen later.
Let $\Omega\subset B(x_0,r)$ have smooth boundary $\partial\Omega$. By $b>0$ from Lemma \ref{volumecomparison} we define
$$s:=4(b+2)^{1/\eta}r\text.$$
Given $x\in\Omega$, let 
$\hat S_x\subset S_x$ denote the set of unit tangent vectors $v$ such that the corresponding geodesic 
$\exp_x sv, s>0$, is a minimal geodesic joining $x$ to some point in $B(x_0,s/3)\setminus B(x_0,r)$ and choose 
$x\in\Omega$ such that $\hat S_x$ has minimal volume. Croke's inequality \cite{Croke-80,Chavel-93} tells us that
\[
 \frac{A(\partial\Omega)}{\Vol(\Omega)^{(n-1)/n}}\geq \frac{c_{n-1}}{(c_n/2)^{1-\frac 1n}}
 \left(\frac{\hat\omega}{c_{n-1}}\right)^{1+1/n}\text.
\]
Therefore it suffices to find a lower bound for $\hat \omega$. Since $r^2\bar\kappa(p,r)<G$, \eqref{scaling2} gives
\begin{align}\label{bigradius}
s^2\bar\kappa(p,s)<\min\left\{\varepsilon, D\left(\frac {Dar^n}{c(n,p,\tau^{-1})}\right)^{\frac 1p}s^2\right\}\text,
\end{align}
such that Lemma \ref{volumecomparison} and the definition of $s$ yield
\begin{align}\label{eq:lowervolumecone}
\Vol(\Gamma(\hat S_{x_0},s/3+r))
&\geq V(x_0,s/3)-V(x_0,r)\nonumber \\
&\geq \left(a\left(\frac s{3r}\right)^\eta-1\right)V(x_0,r)\nonumber\\
&\geq a V(x_0,r)\text.
\end{align}
On the other hand, \eqref{bigradius} and \eqref{scaling1} imply
\begin{align}\label{smallradius}
\left(\frac{r+s/3}{s}\right)^{\frac np-2}(r+s/3)^2\bar\kappa(p,r+s/3)\leq \left(\frac {Dar^n}{c(n,p,\tau^{-1})}\right)^{\frac 1p}s^2
\end{align}
and therefore
\begin{align}\label{middleradius}
\kappa(p,x_0,r+s/3)\leq \frac {aD}{c(n,p,\tau^{-1})}V(x_0,r+s/3)r^n(r+s/3)^{-n}\text.
\end{align}
Apply Proposition \ref{prop:PetersenWei} to \eqref{eq:lowervolumecone} and use \eqref{middleradius}  to get
\begin{align}\label{conecurvature}
\Vol(\Gamma(\hat S_{x_0},s/3+r))&\geq aD\left(\frac r{s/3+r}\right)^n V(x_0,r+s/3)\nonumber\\
&\geq c(n,p,\tau^{-1})\kappa(p,x_0,r+s/3)\text.
\end{align}
In the following we want to choose $\tau$ such that in Proposition \ref{prop:Yang} we can always use the first case in \eqref{uppervolumecone} . If one assumes $s/3+r\geq r_0$, combining the second case of \eqref{uppervolumecone} and \eqref{conecurvature} yields
\[
  c(n,p,\tau^{-1})\leq c(n,p,\tau) (s/3+r)^{2p}
\]
or
\[
\tau^{2p-1}\leq (s/3+r)^{2p}\text.
\]
Therefore, defining $\tau:=s^\frac{2p}{2p-1}>0$ suffices. 
In this case one can conclude that $r+s/3\leq r_0$, and in turn
\[
 \Vol(\Gamma(\hat S_{x_0},s/3+r))\leq (1+\tau)^{n-1}\frac{\hat\omega}n(s/3+r)^n.
\]
Combining the upper and lower bound for the volume of the geodesic cone we get
\begin{align*}
\frac{\hat\omega}{c_{n-1}}&\geq \frac{na}{c_{n-1}}(1+\tau)^{1-n}(s/3+r)^{-n}V(x_0,r)\\
&=\frac{na}{c_{n-1}}(4(b+2)^{\frac{1}{\eta}}+1)^{-n}(1+s^\frac{2p}{2p-1})^{1-n}\frac{V(x_0,r)}{r^n}\text.
\end{align*}
By assumption it is $2p>n$, and therefore $$(1+s^\frac{2p}{2p-1})^{n-1}\leq 2^n (4(b+2)^{\frac{1}{\eta}}+1)^{2p}(1+r^{2p})\text.$$ This yields
\begin{align}\label{endup}
\frac{\hat\omega}{c_{n-1}}
&\geq C_N\frac{V(x_0,r)}{(1+r^{2p})r^n}\text,
\end{align}
where $$C_N:=\frac{na}{c_{n-1}}2^{-n}(4(b+2)^{\frac{1}{\eta}}+1)^{-2p-n}\text.$$
\emph{The compact case:} Let $G$ and $s$ be as above. If $r\leq \frac{\diam(M)}{4(b+2)^{1/\eta}}$, then $s\leq \diam (M)$ and the proof above applies. Otherwise, let 
$$\tilde r:= \frac{\diam(M)}{4(b+2)^{1/\eta}}\quad \text{such that}\quad \tilde s:= \diam(M), \quad \tilde\tau:=\tilde s^\frac{2p}{2p-1}$$
and  $$\tilde G:=\left(\frac D{2^n}\right)^{1/p}(16(b+2)^{2/\eta})^{-1}\min\left\{\varepsilon, \left(\frac {D^{p+1}\tilde r^na}{c(n,p,\tilde\tau^{-1})}\right)^{\frac 1p}\left(4(b+2)^{1/\eta}\right)^{2}\tilde r^2\right\}\text.$$
Assuming $r^2\bar\kappa(p,r)<\tilde G$, \eqref{scaling1} gives $\tilde r^2\bar \kappa(p,\tilde r)<\tilde G$. The proof above applies again and leads to \eqref{endup} with $r$ replaced by $\tilde r$. Since $r^2\bar\kappa(p,r)<\epsilon$, Proposition \ref{prop:PetersenWei} implies the inequality
\[
\frac{\hat\omega}{c_{n-1}}
\geq C_c\frac{V(x_0,r)}{(1+r^{2p})r^n}\text,
\]
where $C_c:=DC_N$. We end up by setting $$C_I:= \frac{c_{n-1}}{(c_n/2)^{1-\frac 1n}}C_c^{1+1/n}$$
and 
$$K_L:=\begin{cases} G\colon &\text{$M$ non-compact or $r\leq \frac{\diam(M)}{4(b+2)^{1/\eta}}$},\\
\tilde G\colon &\text{otherwise.}\end{cases}$$
\end{proof}

\section{Heat kernel bounds and the Kato class}
As mentioned in the introduction, it is sufficient to obtain a Gaussian upper bound for the heat kernel for small times if we have an isoperimetric inequality for every ball of the same radius. The reason is that such isoperimetric inequalities yield relative Faber-Krahn inequalities in these balls. Using a covering argument, Theorem 15.14 in \cite{Grigoryan-09} implies an upper bound for the heat kernel for small times in the collapsing case.
\begin{theorem}\label{theorem:upperbound}
Assume $2p>n\geq 3$, $r>0$ and assume that $r<\diam(M)$. Let $D=D(n,p)>0$ and $K_L=K_L(n,p,r)>0$ as above and 
\begin{equation}
r^2\bar\kappa(p,r)<DK_L\text.
\end{equation}
Then, there exists an explicit $C=C(n,p,r)>0$ such that for all $x\in M$ and $0<t\leq r^2/4$
\[
 p_t(x,x)\leq \frac C{V(x,r)^{n+1}}\,t^{-n/2}\text.
\]
\end{theorem}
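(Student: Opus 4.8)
The plan is to convert the local isoperimetric inequality from Theorem \ref{prop:isoperimetric} into a local (relative) Faber–Krahn inequality on balls $B(x,r)$, and then invoke Theorem 15.14 of \cite{Grigoryan-09} to obtain the on-diagonal heat kernel estimate. First I would fix the hypothesis $r^2\bar\kappa(p,r) < D K_L$; by \eqref{scaling1} (or directly by the volume doubling of Proposition \ref{prop:PetersenWei}) this guarantees that the curvature smallness needed in Theorem \ref{prop:isoperimetric} holds at radius $r$ for every center, so \eqref{eq:isoinequ} is available uniformly: for all $x_0\in M$ and all relatively compact $\Omega\subset B(x_0,r)$ with smooth boundary,
\[
A(\partial\Omega)\geq C_I\left(\frac{V(x_0,r)}{(1+r^{2p})r^n}\right)^{1+1/n}\Vol(\Omega)^{(n-1)/n}\text.
\]

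Next I would upgrade this isoperimetric inequality to a Faber–Krahn-type bound for the first Dirichlet eigenvalue $\lambda_1(\Omega)$. The standard route is the Federer–Fleming / Maz'ya argument: an isoperimetric inequality of the form $A(\partial\Omega)\geq \sigma\,\Vol(\Omega)^{(n-1)/n}$ on all subdomains of a ball yields, via the coarea formula applied to $|\nabla f|$ for $f\in C_c^\infty(\Omega)$, the Sobolev-type inequality $\|f\|_{n/(n-1)}\leq \sigma^{-1}\|\nabla f\|_1$, and then (by applying this to powers of $f$ and Hölder) an $L^2$ Faber–Krahn inequality
\[
\lambda_1(\Omega)\geq c_n\,\sigma^{2}\,\Vol(\Omega)^{-2/n}
\]
for every $\Omega\subset B(x_0,r)$. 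Here $\sigma = C_I\bigl(V(x_0,r)/((1+r^{2p})r^n)\bigr)^{1+1/n}$, so $\sigma^2$ carries the factor $V(x_0,r)^{2(n+1)/n}$. This is precisely the shape of hypothesis required by Theorem 15.14 in \cite{Grigoryan-09}, which states that a relative Faber–Krahn inequality valid in all balls of radius $r$ implies an on-diagonal heat kernel bound; tracking the constant $\sigma$ through that theorem produces the dependence $V(x,r)^{-(n+1)}$ and the time range $0<t\leq r^2/4$, with $t^{-n/2}$ coming from the exponent $2/n$ in the Faber–Krahn inequality.

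I expect the main obstacle to be bookkeeping rather than conceptual: one must check that the constant exponent on $V(x_0,r)$ in the Faber–Krahn inequality after squaring $\sigma$ and combining with the $\Vol(\Omega)^{-2/n}$ factor matches exactly what Theorem 15.14 converts into $V(x,r)^{-(n+1)}$, and that the smallness of $t$ (namely $t\leq r^2/4$) is exactly what \cite{Grigoryan-09} demands so that only balls up to radius $r$ are used in the covering/iteration. A secondary point to be careful about is that Theorem 15.14 typically wants the Faber–Krahn inequality for all $\Omega$ inside balls $B(x,\rho)$ with $\rho\leq r$, not just $\rho = r$; but since $r^2\bar\kappa(p,r)<DK_L$ forces $\rho^2\bar\kappa(p,\rho)<K_L$ for all smaller $\rho$ by \eqref{scaling1}, Theorem \ref{prop:isoperimetric} applies at every such $\rho$, and one simply notes that the resulting family of Faber–Krahn constants degrades in a controlled (polynomial) way in $\rho$, which is exactly the regularity Grigor'yan's theorem tolerates. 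Collecting all constants then yields the explicit $C=C(n,p,r)$.
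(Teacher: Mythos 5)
Your proposal follows the same overall route as the paper: isoperimetric inequality from Theorem~\ref{prop:isoperimetric}, upgrade to a relative Faber--Krahn bound, then Theorem~15.14 of \cite{Grigoryan-09}. Two small technical points where the paper differs. First, for the isoperimetric-to-Faber--Krahn step the paper does not go through the Federer--Fleming/Maz'ya Sobolev route; it simply observes that the isoperimetric bound gives a lower bound on the Cheeger constant of any $\Omega$ (since $\Vol(\Omega')^{-1/n}\geq\Vol(\Omega)^{-1/n}$ for $\Omega'\subset\Omega$) and then applies Cheeger's inequality $\lambda_1\geq h^2/4$. This is shorter but yields the same power of $\Vol(\Omega)^{-2/n}$. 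Second, and more to the point of the bookkeeping you flag as the ``main obstacle'': the paper applies Theorem~\ref{prop:isoperimetric} at radius $r/2$ (not $r$), which is precisely why the hypothesis carries the extra factor $D$, i.e.\ $r^2\bar\kappa(p,r)<DK_L$, so that \eqref{scaling1} gives $(r/2)^2\bar\kappa(p,r/2)<K_L$; this produces the time range $t\leq r^2/4$ directly from Grigor'yan's theorem, and the final step is to replace $V(x,r/2)$ by $V(x,r)$ via Lemma~\ref{volumecomparison}. Your version, working at radius $r$, would need to account for the factor $D$ and the $r^2/4$ range in a slightly different way, but this is exactly the ``controlled degradation'' you already anticipate and does not affect the validity of the argument.
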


\begin{proof}
Let $x\in M$. By Proposition \ref{prop:isoperimetric} it follows from our assumption on $r^2\bar\kappa(p,r)$ that 
$$I(x,r/2)\geq C_I\left(\frac{V(x,r/2)}{(4^p+2^{-2p}r^{2p})2^{-n}r^n}\right)^{1+\frac 1n}=C_I2^{\frac{(2p+n)(n+1)}{n}}\left(\frac{V(x,r/2)}{(4^{2p}+r^{2p})r^n}\right)^{1+\frac 1n}\text.$$
This isoperimetric bound in $B(x,r/2)$ is 
equivalent to the statement that for all $\Omega\in B(x,r/2)$ with smooth boundary $\partial\Omega$ we have
\[
 \frac{A(\partial\Omega)}{\Vol(\Omega)}\geq C_I2^{\frac{(2p+n)(n+1)}{n}}\left(\frac{V(x,r/2)}{(4^{2p}+r^{2p})r^n}\right)^{1+\frac 1n}\Vol(\Omega)^{-1/n}\text.
\]
By Cheeger's theorem, this implies that the first Dirichlet eigenvalue of $\Omega$, $\lambda_1(\Omega)$, is bounded from below,
\[
 \lambda_1(\Omega)\geq C_I^22^{\frac{2(2p+n)(n+1)}{n}-2}\left(\frac{V(x,r/2)}{(4^{2p}+r^{2p})r^n}\right)^{2+2/n}\Vol(\Omega)^{-2/n}\text.
\]
That means, for every ball $B(x,r/2)$ we have a relative Faber-Krahn inequality. Since $(B(x,r/2))_{x\in M}$
is a cover for $M$, Theorem 15.14 in \cite{Grigoryan-09} implies that, after collecting all the constants, for all
$x\in M$ and $t\in (0,r^2/4)$ the heat kernel $p_t$ on $M$ satisfies
\begin{align*}
p_t(x,x)\leq C_h C_I^{-n}2^{n-(2p+n)(n+1)}(4^p+r^{2p})^{2n+2}r^{2n(n+1)}V(x,r/2)^{-n-1}t^{-n/2}
\end{align*}
where
\[
 C_h:= \frac{2\sqrt{17}4^{\frac{n+1}{n+2}+n+2}\euler^2}{4^{n+2}-1} 
 \leq 10\euler^2\frac{4^{n+3}}{4^{n+2}-1}.
\]
Appling Lemma \ref{volumecomparison} to $V(x,r/2)$ proves the theorem.
\end{proof}
\begin{corollary}\label{cor:uniform}
Assume that for $M$ the assumptions of Theorem \ref{theorem:upperbound} hold and additionally
$$T:=\liminf_{x\in M}V(x,r)/r^k> 0$$
for some $k\in \NN$. 
Then we have for all $x\in M$ and $0<t\leq r^2/4$
\[
 p_t(x,x)\leq \frac C{T^{n+1}r^{k(n+1)}}\,t^{-n/2}.
\]
\end{corollary}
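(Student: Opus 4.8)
The plan is to derive Corollary~\ref{cor:uniform} as an immediate consequence of Theorem~\ref{theorem:upperbound} by feeding the extra volume lower bound into the heat kernel estimate. First I would recall that Theorem~\ref{theorem:upperbound} already gives, under the hypothesis $r^2\bar\kappa(p,r)<DK_L$, the bound
\[
 p_t(x,x)\leq \frac{C}{V(x,r)^{n+1}}\,t^{-n/2},\qquad 0<t\leq r^2/4,
\]
with an explicit $C=C(n,p,r)>0$ and the \emph{same} constant for every $x\in M$. So the only thing that needs to be done is to replace the $x$-dependent quantity $V(x,r)$ by a uniform lower bound.

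Next I would use the hypothesis $T:=\liminf_{x\in M}V(x,r)/r^k>0$. By definition of $\liminf$, for any $\delta\in(0,T)$ there are only finitely many (or a relatively compact set of) $x$ with $V(x,r)/r^k<T-\delta$; more simply, one can argue that the infimum $\inf_{x\in M}V(x,r)/r^k$ is itself strictly positive. Indeed, continuity of $x\mapsto V(x,r)$ together with $\liminf>0$ forces the infimum over all of $M$ to be positive as well, since on a complete manifold the volume function is continuous and bounded away from $0$ outside a compact set by the $\liminf$ hypothesis, while on a compact set a positive continuous function attains a positive minimum. Hence there is a constant $T'>0$ (one may take $T'=T$ after a harmless relabeling, or note that the statement as written already uses $T$) with $V(x,r)\geq T' r^k$ for all $x\in M$.

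Finally I would substitute $V(x,r)^{-(n+1)}\leq (T' r^k)^{-(n+1)} = T'^{-(n+1)} r^{-k(n+1)}$ into the bound of Theorem~\ref{theorem:upperbound}, obtaining for all $x\in M$ and $0<t\leq r^2/4$
\[
 p_t(x,x)\leq \frac{C}{V(x,r)^{n+1}}\,t^{-n/2}\leq \frac{C}{T'^{\,n+1}\,r^{k(n+1)}}\,t^{-n/2},
\]
which is the claimed estimate with $T$ in place of $T'$ (or with the understanding that $T$ denotes the actual uniform lower constant). This completes the argument.

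The only mildly delicate point — and the one I would be careful to state correctly — is passing from the $\liminf$ hypothesis to a genuine uniform lower bound $\inf_{x\in M}V(x,r)\geq T r^k$. If one wants to keep exactly the constant $T$ from the $\liminf$, one should note that the corollary is really asserting the bound with any constant $T''<T$, or equivalently that for every $\epsilon>0$ the estimate holds with $T-\epsilon$; alternatively, one observes that since the failure set $\{x:V(x,r)<Tr^k\}$ is relatively compact and $V(\cdot,r)$ is continuous and strictly positive there, the true infimum is some $T_0\in(0,T]$ and the bound holds with $T_0$. In any formulation this is routine; everything else is a direct substitution. There is no real obstacle here — the corollary is a clean specialization of the theorem.
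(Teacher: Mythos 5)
Your proposal is correct and is exactly the intended argument: the paper states this corollary without proof, treating it as an immediate substitution of the uniform volume lower bound into the estimate $p_t(x,x)\leq C V(x,r)^{-(n+1)}t^{-n/2}$ from Theorem~\ref{theorem:upperbound}. Your aside about the distinction between $\liminf_{x\in M}$ and $\inf_{x\in M}$ is a fair point of caution: read literally, the constant appearing in the conclusion should be $\inf_{x\in M}V(x,r)/r^k$ (which your continuity-plus-compactness argument shows is still positive), and this infimum could be strictly smaller than the $\liminf$, so one either interprets $T$ as that infimum (as the paper evidently does) or accepts the bound with any $T''<T$; either way the argument is routine and your treatment is sound.
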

The corollary holds as well when assuming that the injectivity radius $i_M$ of $M$ is positive and $r<i_M/2$, see \cite{Croke-80}, but we need the lower bound only for one $r$ and not for a whole scale. The same argument works under the non-collapsing assumption of Theorem \ref{Yang}.

\begin{corollary}
Under the assumptions of Corollary \ref{cor:uniform} we have
 $$\displaystyle{L^p(M)+L^\infty(M)\subset \mathcal{K}(M)}\text.$$

\end{corollary}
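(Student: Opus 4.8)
The plan is to deduce this directly from the Gu\'eneysu--Post theorem quoted in the introduction, using Corollary \ref{cor:uniform} to supply the required on-diagonal heat kernel estimate. First I would fix the manifold $M$ satisfying the hypotheses of Corollary \ref{cor:uniform}, so that there are $r>0$ with $r<\diam(M)$, the curvature bound $r^2\bar\kappa(p,r)<DK_L$, and a constant $T>0$ with $\liminf_{x\in M}V(x,r)/r^k>0$. Corollary \ref{cor:uniform} then gives a constant $C'=C/(T^{n+1}r^{k(n+1)})>0$, depending only on $n$, $p$, $r$, $T$, $k$, such that
\[
\sup_{x\in M}p_t(x,x)\leq C' t^{-n/2},\qquad 0<t\leq r^2/4\text.
\]
Setting $t_0:=r^2/4>0$ and $C:=C'$, this is exactly the hypothesis of the Gu\'eneysu--Post theorem: a uniform on-diagonal bound $\sup_{x\in M}p_t(x,x)\leq Ct^{-n/2}$ for $0<t\leq t_0$, with the dimension $n\geq 3$ and the exponent $p>n/2$ as already assumed throughout (note $2p>n\geq 3$ is part of the standing hypotheses of Theorem \ref{theorem:upperbound}, hence of Corollary \ref{cor:uniform}).

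Second, I would simply invoke that theorem: it yields $L^p(M)+L^\infty(M)\subset\mathcal{K}(M)$, which is the claimed conclusion. No further work is needed, since the statement to be proved is precisely the conclusion \eqref{LpsubsetKato} of the cited result. I would remark, as the paragraph after the Gu\'eneysu--Post theorem in the introduction already notes, that the specific local dimension $n$ of the heat kernel bound is immaterial for this qualitative membership statement --- any power $t^{-\alpha}$ with $\alpha>0$ would do, at the cost of adjusting the range of admissible $p$ to $p>\alpha$ --- so the argument is robust to the exact constant.

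There is essentially no obstacle here: the corollary is a one-line combination of Corollary \ref{cor:uniform} with an already-cited black box. The only thing to be careful about is bookkeeping of the constants and of the admissible exponent $p$: one must check that the $p$ appearing in the $L^p$-inclusion is the same $p>n/2$ used in the curvature integral bound, which it is, and that $n\geq 3$ so that the cited theorem applies, which is guaranteed by the standing assumption $2p>n\geq 3$. Thus the proof is just: apply Corollary \ref{cor:uniform} to obtain the on-diagonal bound for $0<t\leq r^2/4$, then apply \cite{GueneysuPost-13}.
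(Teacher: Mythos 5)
Your proof is correct and matches the paper's (implicit) argument exactly: the paper states this corollary without proof, expecting precisely the combination of Corollary \ref{cor:uniform} (which supplies $\sup_x p_t(x,x)\leq C' t^{-n/2}$ for $0<t\leq r^2/4$) with the cited Güneysu--Post theorem. Your bookkeeping on the exponent $p>n/2$ and the dimension $n\geq 3$ is the right thing to check, and it does check out.
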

In the special case that we assume that $M$ is compact Theorem \ref{theorem:upperbound} and Proposition \ref{prop:PetersenWei} enable to prove a version of Theorem \ref{Gallot} providing the local dimension $n$ of the heat kernel. Contrary to Gallot's result we do not use a global isoperimetric inequality.
\begin{corollary}
Assume that $M$ is compact and $2p>n\geq 3$. There is an explicit constant $C_G=C_G(n,p,\diam(M), \Vol(M))>0$ such that if
$$(\diam(M))^2\bar\kappa(p,\diam(M))<K_L,$$
then for all $x\in M$ and $0<t\leq \frac 13\diam(M)$
$$p_t(x,x)\leq C_G\, t^{-n/2}\text.$$
\end{corollary}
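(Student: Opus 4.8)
The plan is to apply Theorem \ref{theorem:upperbound} with the distinguished radius $r$ chosen to be $\diam(M)$ itself (or, more precisely, a radius slightly below $\diam(M)$ so that the hypothesis $r<\diam(M)$ of the earlier results is formally met — one may take $r = \diam(M)$ after noting that the proof of Theorem \ref{prop:isoperimetric} in the compact case already handles the borderline via the rescaling to $\tilde s = \diam(M)$). First I would observe that since $M$ is compact, the function $x\mapsto V(x,\diam(M))$ is nothing but $\Vol(M)$ for every $x$, because $B(x,\diam(M)) = M$. Hence the factor $V(x,r)^{-(n+1)}$ appearing in the conclusion of Theorem \ref{theorem:upperbound} is simply the constant $\Vol(M)^{-(n+1)}$, and there is no collapsing to worry about.

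Concretely, I would set $r = \diam(M)$ and invoke Theorem \ref{theorem:upperbound}: under the hypothesis
\[
(\diam(M))^2\bar\kappa(p,\diam(M)) < D\,K_L,
\]
we obtain for all $x\in M$ and $0<t\leq (\diam(M))^2/4$ the bound
\[
p_t(x,x)\leq \frac{C(n,p,\diam(M))}{\Vol(M)^{n+1}}\, t^{-n/2}.
\]
Absorbing $\Vol(M)^{-(n+1)}$ and $C(n,p,\diam(M))$ into a single constant $C_G = C_G(n,p,\diam(M),\Vol(M))$ gives exactly $p_t(x,x)\leq C_G\, t^{-n/2}$. The time range in the statement is $0<t\leq \tfrac13\diam(M)$; since for $\diam(M)$ not too large this is contained in $(0,(\diam(M))^2/4]$, and in any case one can always shrink the admissible time interval and adjust $C_G$ accordingly, so this causes no difficulty. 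The only genuine bookkeeping point is matching the threshold: the statement writes the hypothesis as $(\diam(M))^2\bar\kappa(p,\diam(M))<K_L$ rather than $<DK_L$, which is simply a matter of replacing $K_L$ by $K_L$ (or relabeling the constant absorbing the factor $D\le 1$); I would just note that Theorem \ref{theorem:upperbound} applies with the stated smallness condition after this harmless adjustment of notation.

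I do not expect any real obstacle here — this corollary is a direct specialization of Theorem \ref{theorem:upperbound} to the case where the fixed radius exhausts the manifold, so that local volume lower bounds become the single global quantity $\Vol(M)$. The mild subtlety worth a sentence is that $\diam(M)$ is attained (or approached) and one needs $r<\diam(M)$ in the hypotheses of the earlier results; this is dealt with exactly as in the compact case of the proof of Theorem \ref{prop:isoperimetric}, where the construction already rescales up to $\tilde s = \diam(M)$, so no new argument is required.
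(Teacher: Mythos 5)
The conceptual idea is right — apply Theorem \ref{theorem:upperbound} and exploit compactness to turn $V(x,r)$ into a global quantity — but the specific route you sketch has a gap that the paper avoids by a different choice of radius.

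You propose taking $r = \diam(M)$ (or ``slightly below''). The first option is ruled out by the hypotheses of Theorem \ref{theorem:upperbound}, which require $r < \diam(M)$ strictly; the second option destroys the identity $V(x,r) = \Vol(M)$ on which your argument rests, and it leaves you needing a smallness hypothesis $r^2\bar\kappa(p,r) < D K_L(n,p,r)$ for a radius $r$ that is \emph{not} $\diam(M)$, whereas the stated hypothesis only controls $(\diam(M))^2\bar\kappa(p,\diam(M))$. You wave at the compact case of the proof of Theorem \ref{prop:isoperimetric} as if it resolves the borderline, but that case handles $r < \diam(M)$ by rescaling inward, and does not extend the statement of Theorem \ref{theorem:upperbound} to $r=\diam(M)$.

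The paper instead applies Theorem \ref{theorem:upperbound} with $r = \tfrac{1}{3}\diam(M)$, safely inside the admissible range. This requires two ingredients your proposal omits: (i) the scaling inequality \eqref{scaling1} to transfer the hypothesis from radius $\diam(M)$ to radius $\tfrac13\diam(M)$, namely $\bigl(\tfrac13\diam(M)\bigr)^2\bar\kappa\bigl(p,\tfrac13\diam(M)\bigr) \le 3^{n/p-2}(\diam(M))^2\bar\kappa(p,\diam(M)) < K_L$; and (ii) Proposition \ref{prop:PetersenWei} (volume doubling) to convert $V\bigl(x,\tfrac13\diam(M)\bigr)^{-(n+1)}$ into $\bigl(3^n/\Vol(M)\bigr)^{n+1}$, so that $C_G$ indeed depends only on $n,p,\diam(M),\Vol(M)$. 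These two steps are the substance of the corollary; without them the reduction to Theorem \ref{theorem:upperbound} does not go through under the stated hypothesis.

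One further, more minor, remark: your claim that any time-range discrepancy ``causes no difficulty'' because ``one can always shrink the admissible time interval'' is backwards when the target interval $(0,\tfrac13\diam(M)]$ is \emph{larger} than what Theorem \ref{theorem:upperbound} provides (which happens for small $\diam(M)$); shrinking does not help in that direction. (The paper's own phrasing of the time range looks like a typo as well, but one should at least flag the issue rather than dismiss it.)
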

\begin{proof}
We have by \eqref{scaling1}
\[
\left(1/3\diam(M)\right)^2\bar\kappa(p,1/3\diam(M))\leq 3^{n/p-2}(\diam(M))^2\bar\kappa(p,\diam(M))^2<K_L\text.
\]
Theorem \ref{theorem:upperbound} and Proposition \ref{prop:PetersenWei} imply for all $x\in M$ and $t\in(0,1/3\diam(M)]$
\[
p_t(x,x)\leq \frac{C(n,p,1/3\diam(M))}{V(x,1/3\diam(M))}t^{-n/2}\leq C(n,p,1/3\diam(M))\!\left(\frac{3^n}{\Vol(M)}\right)^{n+1}\!\!t^{-n/2}\text.
\]
\end{proof}
\section{An upper bound for $b_1(M)$}
As explained in the introduction, Kato class techniques are a powerful tool to derive bounds on topological invariants. For compact manifolds $M$ it was shown in \cite{RoseStollmann-15} that it is possible to bound the first Betti number $b_1(M)$ in terms $\bar\kappa(p,\diam(M))$ using Theorem \ref{Gallot}. Because $b_1(M)=\dim H^1(M)$ can be bounded by the trace of the semigroup of the Hodge-Laplacian on $1$-forms, the semigroup domination principle yields a bound on $b_1(M)$ using Theorem \cite{Gallot-88}. The bound shows how an upper bound on $b_1(M)$ scales with the occuring quantities, but is not explicit. This depends on the symmetrization technique used by Gallot to derive the local dimension $p>n$ of the heat kernel. Since Theorem \ref{theorem:upperbound} gives the right local dimension $n$ and the constants are explicitly computed, it is possible to change the occuring quantities in the proof of Corollary 5.7 in \cite{RoseStollmann-15}. Using the same techniques leads to a generalization of this result under locally uniform bounds on the negative part of the Ricci curvature.
\begin{corollary}\label{Betti}
Let $M$ a compact manifold of dimension $n\geq 3$, $p>n/2$ and $r<\diam(M)$. There are explicit constants $K_B=K_B(n,p,\diam(M),\Vol(M))>0$ and $B=B(n,p,\diam(M), r^2\kappa(p,r))>0$ such that if $$r^2\bar\kappa(p,r)<K_B\text,$$
then $$b_1(M)\leq B\text.$$
\end{corollary}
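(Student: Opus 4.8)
The plan is to follow the strategy of Corollary 5.7 in \cite{RoseStollmann-15}, replacing Gallot's global heat kernel bound (Theorem \ref{Gallot}) by the quantitative local bound obtained in Theorem \ref{theorem:upperbound}. The starting point is the semigroup domination principle: since $b_1(M)=\dim H^1(M)$ equals the number of $L^2$-harmonic $1$-forms, and the Hodge--Laplacian on $1$-forms is bounded below (via the Weitzenb\"ock formula) by $\Delta + \rho$, Kato's inequality gives the pointwise domination $|\euler^{-t\Delta_1}\omega|\leq \euler^{-t(\Delta-\rho_-)}|\omega|$ for the heat semigroups. Hence
\[
b_1(M)\leq \Tr \euler^{-\Delta_1}\leq \Tr \euler^{-(\Delta-\rho_-)} = \int_M \tilde p_1(x,x)\,\dvol(x),
\]
where $\tilde p_t$ is the heat kernel of the Schr\"odinger operator $\Delta-\rho_-$.

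Next I would control $\tilde p_t(x,x)$ in terms of the free heat kernel $p_t(x,x)$ using a perturbation estimate: a standard consequence of the Duhamel/Golden--Thompson inequality is
\[
\tilde p_t(x,x)\leq p_t(x,x)\,\euler^{\,b_{\mathrm{Kato}}(\rho_-,t)},
\]
so the whole argument reduces to (i) an upper bound on $p_1(x,x)$ and (ii) an upper bound on the Kato quantity $b_{\mathrm{Kato}}(\rho_-,\beta)$ for a suitable $\beta$. For (i), since $M$ is compact, Theorem \ref{theorem:upperbound} (applied as in the last corollary with radius $r$, after using \eqref{scaling1} to pass to $r$ if necessary and noting $V(x,r)\geq c\,\Vol(M)/\diam(M)^n\cdot r^n$ from Proposition \ref{prop:PetersenWei}) gives $p_t(x,x)\leq C(n,p,\diam(M),\Vol(M))\,t^{-n/2}$ for $0<t\leq r^2/4$; integrating $\Tr\euler^{-t\Delta}$ and using $\lambda_1(\Delta)>0$ on a compact manifold handles $t\geq r^2/4$. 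For (ii), I would estimate
\[
\|P_t\rho_-\|_\infty = \sup_x \int_M \tilde{}\,p_t(x,y)\rho_-(y)\,\dvol(y)\leq \sup_x\Big(\sup_y p_t(x,y)\Big)^{1-1/p}\Big(\int p_t(x,y)\,\dvol\Big)^{0}\cdots,
\]
more precisely by H\"older with exponents $p$ and $p/(p-1)$ together with the on-diagonal bound (via $p_t(x,y)\leq \sqrt{p_t(x,x)p_t(y,y)}$) and $\int \rho_-^p\leq \kappa(p,\diam(M))\leq V(x,r)^{-1}\cdot(\dots)$: this yields $\|P_t\rho_-\|_\infty\leq C'\,t^{-n/(2p)}\,(r^2\bar\kappa(p,r))^{1/p}\cdot(\text{vol factors})$, which is integrable in $t$ near $0$ precisely because $p>n/2$, giving $b_{\mathrm{Kato}}(\rho_-,\beta)\leq C''\beta^{1-n/(2p)}\,(r^2\bar\kappa(p,r))^{1/p}\cdot(\dots)$. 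Choosing $\beta$ fixed (say $\beta=r^2/4$) makes the exponential factor a bounded explicit constant; one then runs the semigroup for a fixed total time using the Markov property $\tilde p_1 = \tilde p_{1/N}\ast\cdots\ast\tilde p_{1/N}$ if $\beta<1$. Collecting constants produces the bound $B=B(n,p,\diam(M),r^2\kappa(p,r))$, and the smallness threshold $K_B$ is whatever is needed so that Theorem \ref{theorem:upperbound} applies (i.e.\ $K_B\leq DK_L$, after the scaling adjustment) and so that the exponent $1/p$ power of $r^2\bar\kappa(p,r)$ is well-defined; note $K_B$ depends only on $n,p,\diam(M),\Vol(M)$ since $K_L$ and $D$ do.

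The main obstacle I expect is the bookkeeping of volume factors: Theorem \ref{theorem:upperbound} produces a bound with a $V(x,r)^{-(n+1)}$ factor, and $b_1(M)$ is obtained by integrating $\tilde p_1(x,x)$ against $\dvol$, so one must simultaneously control $\int_M V(x,r)^{-(n+1)}\dvol(x)$ and the analogous volume-dependent constants appearing in the Kato estimate, all in terms of the \emph{global} quantities $\diam(M)$, $\Vol(M)$ and $r^2\kappa(p,r)$ only. Here the lower volume bound from Proposition \ref{prop:PetersenWei} --- namely $V(x,r)\geq D\,(r/\diam M)^n\Vol(M)$ --- is the key input that converts the local volume factors into admissible global constants; one must check that the smallness hypothesis $r^2\bar\kappa(p,r)<K_B$ indeed forces $\diam(M)^2\bar\kappa(p,\diam(M))<\varepsilon$ via \eqref{scaling2} so that this lower bound is available. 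Everything else is a routine, if tedious, tracking of explicit constants through H\"older's inequality, the Golden--Thompson bound, and the time-splitting; none of it is conceptually hard once the volume normalization is in place.
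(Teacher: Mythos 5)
Your outline matches the paper's strategy at the conceptual level: both approaches combine the new heat-kernel bound from Theorem~\ref{theorem:upperbound} with semigroup domination for the Hodge--Laplacian and an estimate of the Kato quantity $b_{\mathrm{Kato}}(\rho_-,\cdot)$, as developed in \cite{RoseStollmann-15}. The paper simply cites Proposition~5.1 and Corollary~5.7 of \cite{RoseStollmann-15} as black boxes, feeding in the on-diagonal bound $p_t(x,x)\leq C_1 t^{-n/2}$ obtained from Theorem~\ref{theorem:upperbound} together with the global lower volume bound $V(x,r)\geq D(r/\diam M)^n\Vol(M)$ from Proposition~\ref{prop:PetersenWei}; you instead re-derive the machinery from scratch via Feynman--Kac/Golden--Thompson and H\"older. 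That is a fine alternative route in principle.

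However, there is a concrete gap in the re-derivation. The claimed perturbation estimate $\tilde p_t(x,x)\leq p_t(x,x)\,\euler^{\,b_{\mathrm{Kato}}(\rho_-,t)}$ is not a correct consequence of Duhamel or Golden--Thompson. The correct statement (via Khas'minskii's lemma and the Feynman--Kac representation) gives a bound of the form $\Vert \euler^{-t(\Delta-\rho_-)}\Vert_{\infty,\infty}\leq \frac{1}{1-b}\euler^{ct/\beta}$ \emph{only when} $b:=b_{\mathrm{Kato}}(\rho_-,\beta)<1$; without this subcriticality condition the iteration diverges and no bound is obtained. This is precisely why the paper's choice of $K_B$ includes the extra term $T$, which is engineered so that the resulting $b_{\mathrm{Kato}}(\rho_-,r^2/4)$ is strictly below one before Corollary~5.7 of \cite{RoseStollmann-15} can be invoked. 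Your proposed $K_B$ only guarantees that Theorem~\ref{theorem:upperbound} applies; you never enforce $b_{\mathrm{Kato}}(\rho_-,\cdot)<1$, and the remark that $\beta=r^2/4$ ``makes the exponential factor a bounded explicit constant'' silently presupposes the incorrect form of the inequality. To repair the argument you would need to add this smallness constraint to $K_B$, exactly as the paper does; once that is in place the rest of the bookkeeping (volume normalization via Proposition~\ref{prop:PetersenWei}, time-splitting by the semigroup property, and the H\"older estimate of $\Vert P_t\rho_-\Vert_\infty$) goes through as you describe.
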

\begin{proof}
Set
$$K_B:=\min\left\{\left(\frac{r}{\diam(M)}\right)^2\left(\frac D{2^n}\right)^{1/p} K_L,\, T,\, \Vol(M)^{n}\right\}$$
where $$T:= \left(\frac D{2^n}\right)^{1/p}\frac{2p-n}{2p}\frac{4^\frac{2p-n}{2p}\Vol(M)^{n+1-\frac1p}}{C(n,p,r)}\left(\frac{r}{\diam(M)}\right)^{n(n+1)}\text.$$
First one has to get a global bound of the heat kernel for small times. Since $K_B\leq K_L$, Theorem \ref{theorem:upperbound} gives 
$$p_t(x,x)\leq \frac {C(n,p,r)}{V(x,r)^{n+1}}t^{-n/2},\quad t\in(0,r^2/4)\text,$$
Furthermore, \eqref{scaling2} implies $(\diam(M))^2\bar\kappa(p,\diam(M))<\epsilon$, such that we have a global volume doubling property on $M$. Setting $$C_1:=\frac {C(n,p,r)}{\Vol(M)^{n+1}}\left(\frac{\diam(M)}{r}\right)^{n(n+1)}$$ 
gives
$$p_t(x,x)\leq C_1t^{-n/2},\quad t\in(0,r^2/4)\text.$$ 
The following is an adaption to section 5 of \cite{RoseStollmann-15}. We only explain the changes which have to be done in this article to get the desired result.
If we denote by $\Vert\cdot\Vert_{p,q}$ the operator norm from $L^p(M)$ to $L^q(M)$, we get for the semigroup generated by the Laplace-Beltrami operator $\Delta\geq 0$ on $M$ for $p>n$
$$\Vert\euler^{t\Delta}\Vert_{p,\infty}\leq C_1^{1/p}t^{-n/2p},\quad 0<t\leq r^2/4\text.$$
Proposition 5.1 in \cite{RoseStollmann-15} tells us 
\[
b_{\rm{Kato}}(\rho_-,r^2/4)\leq C_2\bar\kappa(p,\diam(M))
\]
with $$ C_2:= \Vol(M)^{1/p}\frac{2p}{2p-n}\left(\frac{r^2}4\right)^\frac{2p-n}{2p}C_1\text.$$
To get $L^p$-$L^q$-smoothing for the Hodge-Laplacian as explained in \cite{RoseStollmann-15} one has to force $b_{\rm{Kato}}(\rho_-,r^2/4)$ to be smaller than one. Using \eqref{scaling2}, the definition of $K_B$ leads to
$$(\diam(M))^2\bar\kappa(p,\diam(M))\leq \left(\frac{2^n}{D}\right)^{1/p}\left(\frac{\diam(M)}{r}\right)^2r^2\bar\kappa(p,r)<C_2^{-1}\text.$$
Define 
$$C_3:= D^{1/p}2^{-n/p}r^{-2}\text.$$
Corollary 5.7 in \cite{RoseStollmann-15} implies by the defnition of $K_B$
\begin{align*}\label{Betti1}
b^1(M)&\leq d\left(\frac 2{1-C_3r^2\bar\kappa(p,r)}\right)^{\left(1+\frac 4{r^2}\right)\frac 2{1-C_3r^2\bar\kappa(p,r)}}\frac {C(n,p,r)}{\Vol(M)^n}\left(\frac{\diam(M)}{r}\right)^{n(n+1)}\nonumber\\
&\leq d\left(\frac 2{1-C_3r^2\bar\kappa(p,r)}\right)^{\left(1+\frac 4{r^2}\right)\frac 2{1-C_3r^2\bar\kappa(p,r)}}\frac {C(n,p,r)}{r^2\bar\kappa(p,r)}\left(\frac{\diam(M)}{r}\right)^{n(n+1)}\text.
\end{align*}
\end{proof}

\begin{remark}
During the publication process we learned that independently Dai, Wei and Zhang also worked on local integral Ricci curvature bounds, see \cite{DaiWeiZhang-16}. They gain slightly better estimates on the Sobolev constant but using completely different and much more complicated techniques, and develop it for completely different purposes.
\end{remark}
\section*{Acknowledgement}
The author wants to thank Peter Petersen and Alexander Grigoryan for stimulating discussions about the topic and their hospitality during the authors research stays at UCLA and the University of Bielefeld.
\def\cprime{$'$}

\end{document}